\crefname{hypothesis}{Hypothesis}{Hypotheses}
\crefname{fact}{Fact}{Facts}
\title{NLAFormer: Transformers Learn Numerical Linear Algebra Operations
\thanks{Submitted to the editors August 2025.
This work was partly supported by the National Key Research and Development Program of China under Grant 2024YFE0202900 and Joint NSFC and RGC N-HKU769/21.}}
\author{Zhantao Ma\thanks{Department of Mathematics, The University of Hong Kong, Pokfulam, Hong Kong (mazhantao@connect.hku.hk).}
\and Yihang Gao\thanks{Department of Mathematics, 
National University of Singapore, Singapore (gaoyh@nus.edu.sg).}
\and Michael K. Ng\thanks{Department of Mathematics, Hong Kong Baptist University, Kowloon Tong, Hong Kong (michael-ng@hkbu.edu.hk).}
}
\begin{document}

\maketitle

\begin{abstract}
Transformers are effective and efficient at modeling complex relationships and learning patterns from structured data in many applications.
The main aim of this paper is to propose and design NLAFormer, which is
a transformer-based architecture for learning numerical linear algebra operations: pointwise computation, shifting, transposition, inner product, matrix multiplication, and matrix-vector multiplication. 
Using a linear algebra argument, we demonstrate that transformers can express such operations. 
Moreover, the proposed approach discards the simulation of computer control flow adopted by the loop-transformer, significantly reducing both the input matrix size and the number of required layers. By assembling linear algebra operations, NLAFormer can learn the conjugate gradient method to solve symmetric positive definite linear systems. Experiments are conducted to illustrate the numerical performance of NLAFormer.
\end{abstract}

\begin{keywords}
Transformers, numerical linear algebra, machine learning
\end{keywords}

\begin{MSCcodes}
65F10, 68Q32
\end{MSCcodes}

\section{Introduction}

The transformer model, introduced by Vaswani et al. in 2017, was originally developed for natural language processing (NLP) tasks \cite{vaswani2017attention}. 
By combining multi-head self-attention with feedforward neural networks (FFNs), transformers effectively capture global dependencies in input sequences. 
Transformers have proven to be effective in modeling complex relationships and learning patterns from structured data \cite{diao2022relational}. Their success has revolutionized fields such as natural language processing, computer vision, and scientific reasoning, see \cite{choi2019graph,islam2024comprehensive}. \begin{figure}[ht]
    \centering
    \includegraphics[width=0.8\linewidth]{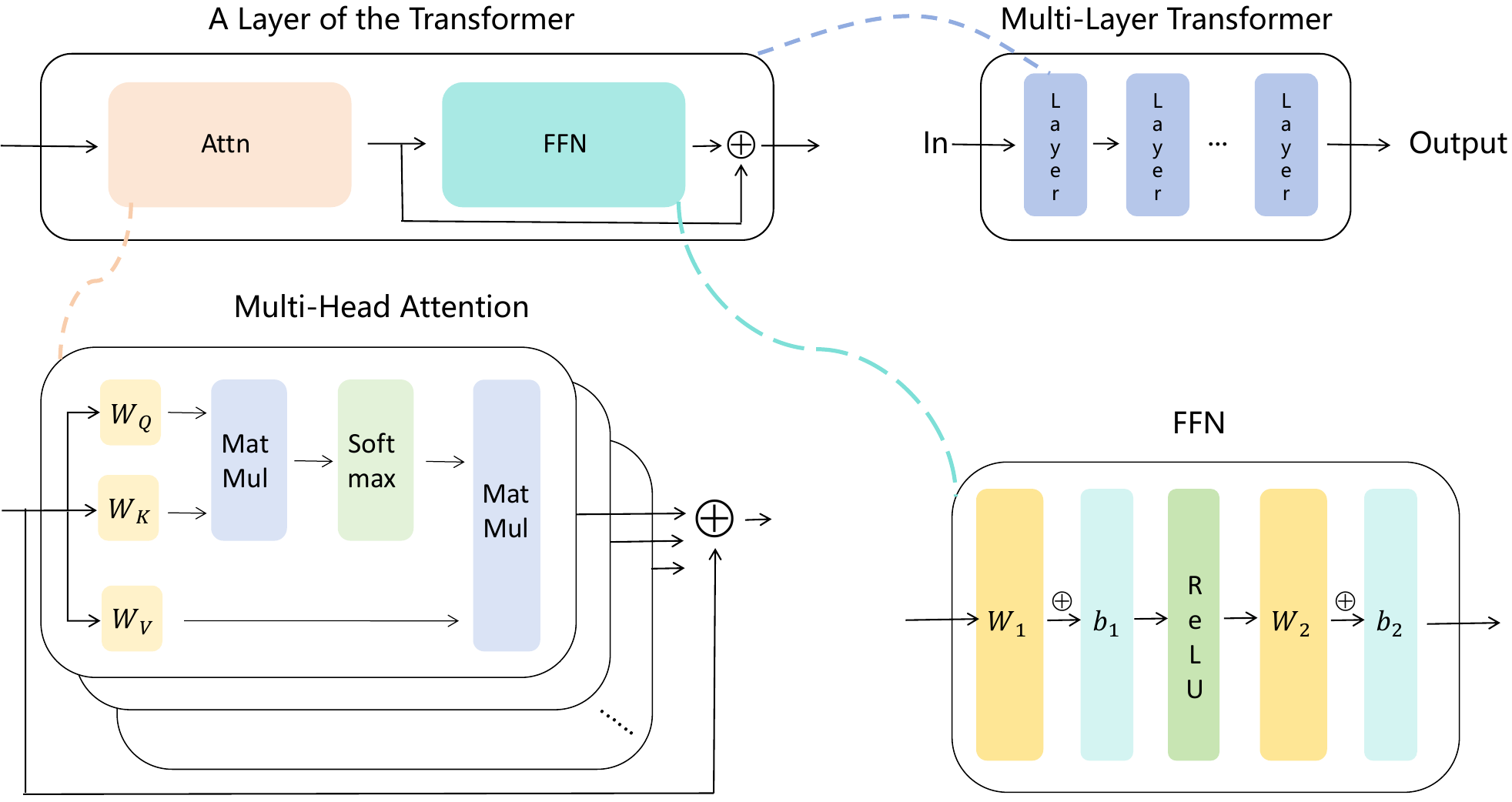}
    \caption{The components of a multi-layer transformer}
    \label{Transformer diagram}
\end{figure}

In \cref{Transformer diagram}, we illustrate the components of a multi-layer transformer.
Each single-layer transformer consists of a multi-head attention module and a feedforward neural network.
The attention module contains three elements: queries (Q), keys (K), and values (V).
These three elements are represented by three matrices ${\bf W}_Q$, ${\bf W}_K$ and ${\bf W}_V$ respectively. 
For an input token ${\bf P}$, the attention mechanism computes similarity scores between queries and keys to determine how much attention each position should pay to others. These scores are normalized using softmax and used to calculate a weighted sum of the values, allowing each position to incorporate context from the entire sequence and efficiently model global dependencies.
The output $\text{TF}(\mathbf{P})$ is given by the following computational procedure:
\begin{equation}
\label{DefTrans}
    \begin{aligned}
& \text{Attn}(\mathbf{P}) = \mathbf{P} + \sum_{i=1}^{h} \mathbf{W}_V^{(i)} \mathbf{P} \cdot \text{softmax} \left( \mathbf{P}^\top \mathbf{W}_K^{(i)\top} \mathbf{W}_Q^{(i)} \mathbf{P} \right), \\
& \text{FFN}(\mathbf{P})=\mathbf{W}_2 \text{ReLU}(\mathbf{W}_1 \mathbf{P} + \mathbf{b}_1 \mathbf{1}^\top) + \mathbf{b}_2\mathbf{1}^\top,\\
& \text{TF}(\mathbf{P}) = \text{Attn}(\mathbf{P}) + \text{FFN}(\text{Attn}(\mathbf{P})),\\
& \text{where} \ \text{softmax}(\mathbf{Z})_{i,j} = \frac{e^{{Z}_{i,j}}}{\sum_{k=1}^{d} e^{{Z}_{k,j}}}, \quad
\text{ReLU}(\mathbf{Z})_{i,j} = \max(0, {{Z}_{i,j}}), \\
& \quad \quad \quad \mathbf{1} \ \text{is a vector of all ones}. \\
\end{aligned}
\end{equation}

In this process, multi-head attention is used to capture different types of relationships. The output is then passed through a feedforward network, which enhances the ability of the model to learn complex mappings using two linear transformations ($\mathbf{W}_1, \mathbf{W}_2$)
with biased vectors ($\mathbf{b}_1, \mathbf{b}_2$), and a nonlinear rectified linear unit (ReLU). 
A multi-layer transformer stacks several single-layer transformers sequentially, with each layer refining the output of the previous one to capture increasingly complex patterns in the input. 

\subsection{Numerical Linear Algebra Operations}

Numerical linear algebra supports a wide range of applications, including scientific computing, engineering simulations, and economic modeling. It serves as a foundational tool for precise and efficient problem solving in these fields \cite{jbilou2021numerical}. At its core, numerical linear algebra comprises fundamental mathematical operators such as shifting, matrix transposition, multiplication, inner products, etc. These operators are not merely computational tools; they form the theoretical basis of classical iterative numerical algorithms \cite{trefethen2022numerical}. 

A previous study \cite{Charton2021LinearAW} experimentally verified that the transformer can perform basic numerical linear algebra operations by adopting a very specialized encoding. In \cite{Giannou2023LoopedTA}, Giannou et al.
developed a loop-transformer model as a programmable computer. 
More precisely, they designed and mapped transformer
models to fundamental computer modules such as scratchpad, memory, and instruction.
The loop-transformer model can be employed to simulate the execution of basic numerical linear algebra operations. 
In \cite{Yang2023LoopedTA}, Yang et al. designed the looped transformers to learn algorithms such as sparse linear regression. In \cite{gao2024expressive}, Gao et al. incorporated pre- and post-transformer modules, inspired by the common pre-processing and post-processing steps in traditional algorithms, to enhance performance in complex learning tasks. 

Despite these promising developments, existing approaches that simulate computer control flow with transformers remain inefficient and ill-suited for numerical linear algebra operations. The fundamental issue lies in the attempt to emulate each primitive operation, such as memory access, data movement, and basic arithmetic, using a whole transformer block. For example, transposing an $n \times n$ matrix using the loop-transformer \cite{Giannou2023LoopedTA} requires $O(n^3)$ input and a 4-layer transformer architecture to ensure successful learning. Moreover, for matrix multiplication between $n \times n$ matrices: $\mathbf{A}^{\top}\mathbf{B}$ , the loop-transformer requires $O(n^3)$ input and a 6-layer transformer architecture.
These observations motivate a deeper exploration of transformers in the domain of numerical linear algebra by distilling the transformer architecture, identifying the components that are genuinely useful for numerical linear algebra while eliminating those that introduce unnecessary computations and storage.

\subsection{Our Contributions}

In this paper, we propose and design Numerical Linear Algebra transFormer (NLAFormer), which is 
a transformer-based architecture for learning numerical linear algebra operations.
Using a linear algebra argument, we show that the proposed NLAFormer
has the capacity to express numerical linear algebra operations such as
pointwise computation, shifting, transposition, inner product, matrix multiplication, and matrix-vector multiplication.
Moreover, our approach discards the simulation of computer control flow adopted by loop-transformer, significantly reducing both the size of the input matrix and the required number of layers. For example, in the case of $n$-by-$n$ matrix transposition, our approach only requires an input of size $O(n^2)$ and a 2-layer transformer; see the results in Section 2.
Similar reductions apply to other basic numerical linear algebra operations. 
Our NLAFormer can focus on learning meaningful representations, making it highly effective for general-purpose numerical linear algebra solvers.
By assembling linear algebra operations, we demonstrate that NLAFormer can learn the conjugate gradient method for solving symmetric positive definite linear systems. Experimental results are conducted to illustrate the numerical performance of the proposed NLAFormer, showing that it can internalize the iterative logic of classical solvers and, through data-driven training, identify update strategies for faster convergence.

The outline of this paper is given as follows. 
In Section 2, we develop and design transformers for numerical linear algebra operations and show their expressiveness for such operations. 
In Section 3, we demonstrate how NLAFormer can learn a conjugate gradient method to solve symmetric positive definite linear systems.
In Section 4, we present numerical results to illustrate the numerical performance of the proposed NLAFormer. In Section 5, we provide concluding remarks and present future work.
\subsection{Notations}

We use boldface capital and lowercase letters to denote matrices and vectors, respectively. Non-bold letters represent the elements of matrices, vectors, or scalars. The subscripts represent the serial numbers or the elements of the specified rows and columns of the matrix. For example, $\mathbf{P}_{r}$ means the $r$th row of $\mathbf{P}$, ${P}_{i,j}$ means the $i$-th row, $j$-th column of $\mathbf{P}$. Calligraphic letters are used to represent sets.

\section{Learning Numerical Linear Algebra Operations}
\label{sec:Learning Numerical Linear Algebra Operations}
A central step toward understanding the expressive capacity of transformers in numerical linear algebra is to investigate whether they can represent the fundamental operators that form the basis of classical numerical methods. Regardless of the complexity of a numerical algorithm, its computational structure ultimately relies on a small set of core operations, including pointwise computation, shifting, transposition, inner product, matrix multiplication, matrix-vector multiplication, etc. We compare the different requirements of NLAFormer and loop-transformer in handling basic operators in \cref{Tablecopmpare}. As both methods require a few heads, we only present their required input matrix sizes and number of layers in the table.

\begin{table}
\centering
\begin{tabular}{|c|l|l|l|}
\hline
Operation & Requirement   & NLAFormer                & Loop-Transformer      \\ \hline
                              \multirow{2}{*}{\begin{tabular}[c]{@{}c@{}}Pointwise $+, -, \cdot, \div$ \\ between vectors\end{tabular}} &
  Input size &
  $O(n)$ &
  $O(n)$ \\ \cline{2-4} 
                              & Number of layers   & $1$                  & $12n$                  \\ \hline
 \multirow{2}{*}{Column shift}         & Input size & $O(n)$                    & $O(n)$                    \\ \cline{2-4} 
                              & Number of layers   & $1$                & $4$                 \\ \hline                             
\multirow{2}{*}{Row shift}         & Input size & $O(n)$                    & $O(n \log(n))$             \\ \cline{2-4} 
                              & Number of layers   & $1$                & $4n$                  \\ \hline
\multirow{2}{*}{Vector transpose}         & Input size & $O(n^2)$ & $O(n^3)$ \\ \cline{2-4} 
                              & Number of layers   & $1$                   & $4$                  \\ \hline
\multirow{2}{*}{$\mathbf{a}^{\top}\mathbf{b}$}         & Input size & $O(n^2)$  & $O(n^3)$ \\ \cline{2-4} 
                              & Number of layers   & $1$                   & $6$                  \\ \hline
\multirow{2}{*}{$\mathbf{a}\mathbf{b}^{\top}$}         & Input size & $O(n^2)$ & $O(n^3)$ \\ \cline{2-4} 
                              & Number of layers   & $1$                   & $6$                  \\ \hline                              
\multirow{2}{*}{Matrix transpose}         & Input size & $O(n^2)$ & $O(n^3)$ \\ \cline{2-4} 
                              & Number of layers   & $1$                 & $4$                 \\ \hline

\multirow{2}{*}{$\mathbf{A}^{\top}\mathbf{B}$}         & Input size & $O(n^2)$ & $O(n^3)$ \\ \cline{2-4} 
                              & Number of layers   & $1$                   & $6$                  \\ \hline
\multirow{2}{*}{$\mathbf{A}\mathbf{B}$}         & Input size & $O(n^2)$ & $O(n^3)$ \\ \cline{2-4} 
                              & Number of layers   & $2$               & $10$                 \\ \hline

\multirow{2}{*}{$\mathbf{A}\mathbf{b}$} & Input size & $O(n^2)$ & $O(n^3)$ \\ \cline{2-4} 
                              & Number of layers   & $2$                  & $6$                   \\ \hline                              
\end{tabular}
\caption{Different requirements between NLAFormer and loop-transformer.}
\label{Tablecopmpare}
\end{table}

Here we present \cref{TheAlgformerTheorembasicopr} to show the expressiveness of transformers for handling ten basic numerical linear algebra operations in terms of the number of layers and heads in the structure, where each layer consists of a self-attention mechanism followed by an FFN, and each head represents an independent attention mechanism that captures different structural aspects of the input, as illustrated in \cref{Transformer diagram}. Throughout the following proof, it is assumed that all vectors \( \mathbf{a} \in \mathbb{R}^n \) are in a compact subset \( \mathcal{K}_{n} \subseteq \mathbb{R}^n \), and that all matrices \( \mathbf{A} \in \mathbb{R}^{n \times n} \) are assumed to lie in a compact subset \( \mathcal{K}_{n \times n} \subseteq \mathbb{R}^{n \times n} \). Unless specified, norm $\|\cdot\|$ is taken to be the Frobenius norm for matrices.

\begin{theorem}
\label{TheAlgformerTheorembasicopr}

\begin{enumerate}
\item[(i)] For any $\mathbf{a},\mathbf{b}\in \mathbb{R}^{n}$, there exists a one-layer, one-head transformer to execute the point-wise addition, subtraction, multiplication, and division (with $b_{i}\neq 0$) between vectors: 
\begin{equation}
    \mathbf{P}=\begin{bmatrix}
        {a}_{1} &\dots &  {a}_{n}\\
        {b}_{1} &\dots &  {b}_{n}\\
        0 &\dots &  0
    \end{bmatrix} \quad {\rm and} \quad\text{TF}(\mathbf{P})\approx\begin{bmatrix}
        {a}_{1} &\dots &  {a}_{n}\\
        {b}_{1} &\dots &  {b}_{n}\\
        {d}_{1} &\dots &  {d}_{n}
    \end{bmatrix},
\end{equation}
where $\mathbf{d}=\mathbf{a} \square \mathbf{b}$, with $\square \in \{+, -, \cdot, \div\}$, and $a_{i},b_{i},d_{i},$ for $1\leq i\leq n$ denote the $i$th elements of the respective vectors;
\item[(ii)]    For any $\mathbf{a},\mathbf{b}\in \mathbb{R}^{n}$, there exists a one-layer, two-head transformer to execute the column shifting:
    \begin{equation}
       \mathbf{P}=\begin{bmatrix}
        \bf{0}&\mathbf{a}&\mathbf{b}\\
 \bf{0}&\bf{0}&\bf{0}\\
\mathbf{e}_1&\mathbf{e}_{2}&\mathbf{e}_{3}
    \end{bmatrix} \quad {\rm and} \quad \text{TF}(\mathbf{P})\approx\begin{bmatrix}
        \bf{0}&\mathbf{b}&\mathbf{a}\\
 \bf{0}&\bf{0}&\bf{0}\\
        \mathbf{e}_{1} &\mathbf{e}_{2}&\mathbf{e}_{3}
    \end{bmatrix},
    \end{equation}
where $\mathbf{e}_{1}$, $\mathbf{e}_{2}$ and $\mathbf{e}_{3}$ are the first, second and third unit vectors 
of the 3-by-3 identity matrix; there exists a one-layer, one-head transformer to execute the row shifting:
    \begin{equation}\mathbf{P}=\begin{bmatrix}
        \mathbf{a}^{\top}\\
\mathbf{b}^{\top}
    \end{bmatrix} \quad and \quad \text{TF}(\mathbf{P})\approx\begin{bmatrix}
        \mathbf{b}^{\top}\\
\mathbf{a}^{\top}
    \end{bmatrix};    
   \end{equation}
there exists a one-layer, two-head transformer to execute the transpose of $\mathbf{a}$ 
such that  
\begin{equation}
        \mathbf{P}=
        \begin{bmatrix}
            {0}& &   \mathbf{a}^{\top} & \\
            \mathbf{0}& \bf{0} & \dots & \mathbf{0}\\
            \mathbf{e}_{1} & \mathbf{e}_2 & \dots &\mathbf{e}_{n+1}
        \end{bmatrix} \quad {\rm and} \quad \text{TF}(\mathbf{P})\approx
         \begin{bmatrix}
            \mathbf{0}& \dots & \bf{0} &\mathbf{a}\\
            {0}& \dots & 0 & {0}\\
            \mathbf{e}_{1}& \dots & \mathbf{e}_n & \mathbf{e}_{n+1}
        \end{bmatrix},
    \end{equation}
    where $\mathbf{e}_{1}, \dots, \mathbf{e}_{n+1}$ are the first to $(n{+}1)$th unit vectors of the $(n{+}1) \times (n{+}1)$ identity matrix;
    \item[(iii)]
    For any $\mathbf{a},\mathbf{b}\in \mathbb{R}^{n}$, there exists a one-layer, two-head transformer to execute the inner product $\mathbf{a}^{\top}\mathbf{b}$:
    \begin{equation}
    \mathbf{P}=\begin{bmatrix}
        {0}&&\mathbf{a}^{\top} & \\
        {0}&&\mathbf{b}^{\top} & \\
        {0}& 0 & \dots & {0}\\
        \mathbf{e}_{1}& \mathbf{e}_2 & \dots& \mathbf{e}_{n+1}
    \end{bmatrix} \quad {\rm and} \quad  \text{TF}(\mathbf{P})\approx   \begin{bmatrix}
        {0}& &\mathbf{a}^{\top} & \\
        {0}&&\mathbf{b}^{\top}& \\
        {0}& 0 & \dots & \mathbf{a}^{\top}\mathbf{b}\\
        \mathbf{e}_{1}& \mathbf{e}_2 & \dots&  \mathbf{e}_{n+1}
    \end{bmatrix};
\end{equation}
    there exists a one-layer, two-head transformer to execute the product between vectors
    $\mathbf{a}\mathbf{b}^{\top}$:
\begin{equation}
    \mathbf{P}=\begin{bmatrix}
        {0}&&\mathbf{a}^{\top} & \\
        {0}&&\mathbf{b}^{\top} & \\
        \mathbf{0}& \mathbf{0} & \dots & \mathbf{0}\\
        \mathbf{e}_{1}&\mathbf{e}_2 & \dots & \mathbf{e}_{n+1}
    \end{bmatrix} \quad {\rm and} \quad  \text{TF}(\mathbf{P})\approx   \begin{bmatrix}
        {0}&&\mathbf{a}^{\top} & \\
        {0}&&\mathbf{b}^{\top} & \\
        \mathbf{0}&&\mathbf{a}\mathbf{b}^{\top} & \\
        \mathbf{e}_{1}&\mathbf{e}_2 & \dots & \mathbf{e}_{n+1}
    \end{bmatrix}; 
\end{equation}
\item[(iv)]
For any $\mathbf{A},\mathbf{B}\in\mathbb{R}^{n\times n}, \mathbf{b}\in \mathbb{R}^{n}$, there exists a one-layer, two-head transformer to execute the transpose of a matrix such that 
   \begin{equation}
    \mathbf{P}=\begin{bmatrix}
        \mathbf{0}& & \mathbf{A} & \\
         \mathbf{0}& \bf{0} & \cdots & \bf{0} \\
        \mathbf{e}_{1}& \mathbf{e}_2 & \dots &\mathbf{e}_{n+1}
    \end{bmatrix} \quad {\rm and} \quad \text{TF}(\mathbf{P})\approx\begin{bmatrix}
        \mathbf{0}& &\mathbf{A}^{\top} & \\
         \mathbf{0}& \bf{0} & \dots & \mathbf{0}\\
        \mathbf{e}_{1} & \mathbf{e}_2 &\dots&\mathbf{e}_{n+1}
    \end{bmatrix};
\end{equation}
there exists a one-layer, two-head transformer to execute the multiplication between matrices: $\mathbf{A}^{\top}\mathbf{B}$ such that 
  \begin{equation}
    \mathbf{P}=\begin{bmatrix}
        \mathbf{0}& &\mathbf{A} & \\
        \mathbf{0}& &\mathbf{B} & \\
        \mathbf{0}& \mathbf{0} & \dots &\mathbf{0}\\
        \mathbf{e}_{1}& \mathbf{e}_2 & \dots&\mathbf{e}_{n+1}
    \end{bmatrix} \quad {\rm and} \quad  \text{TF}(\mathbf{P})\approx   \begin{bmatrix}
     \mathbf{0}& &\mathbf{A} & \\
        \mathbf{0}& &\mathbf{B} & \\
        \mathbf{0}&  & \mathbf{A}^{\top}\mathbf{B}  & \\
        \mathbf{e}_{1}& \mathbf{e}_2 & \dots&\mathbf{e}_{n+1}
    \end{bmatrix}; 
\end{equation}
there exists a two-layer, two-head transformer to execute the multiplication between matrices: $\mathbf{A}
\mathbf{B}$ such that 
  \begin{equation}
    \mathbf{P}=\begin{bmatrix}
        \mathbf{0}& &\mathbf{A} & \\
        \mathbf{0}& &\mathbf{B} & \\
        \mathbf{0}& \mathbf{0} & \dots &\mathbf{0}\\
        \mathbf{e}_{1}& \mathbf{e}_2 & \dots&\mathbf{e}_{n+1}
    \end{bmatrix} \quad {\rm and} \quad  \text{TF}(\mathbf{P})\approx   \begin{bmatrix}
     \mathbf{0}& &\mathbf{A} & \\
        \mathbf{0}& &\mathbf{B} & \\
        \mathbf{0}&  & \mathbf{A} \mathbf{B}  & \\
        \mathbf{e}_{1}& \mathbf{e}_2 & \dots&\mathbf{e}_{n+1}
    \end{bmatrix};
\end{equation}
there exists a two-layer, two-head transformer to execute the multiplication between matrix and vector: $\mathbf{A}
\mathbf{b}$ such that 
  \begin{equation}
    \mathbf{P}=\begin{bmatrix}
        \mathbf{0}& &\mathbf{A} & \\
        {0}& &\mathbf{b}^{\top} & \\
        \mathbf{0}& \mathbf{0} & \dots &\mathbf{0}\\
        \mathbf{e}_{1}& \mathbf{e}_2 & \dots&\mathbf{e}_{n+1}
    \end{bmatrix} \quad {\rm and} \quad  \text{TF}(\mathbf{P})\approx   \begin{bmatrix}
     \mathbf{0}& &\mathbf{A} & \\
        {0}& &\mathbf{b}^{\top} & \\
        \mathbf{0}&  & \mathbf{A} \mathbf{b}  & \\
        \mathbf{e}_{1}& \mathbf{e}_2 & \dots&\mathbf{e}_{n+1}
    \end{bmatrix}.
\end{equation}
\end{enumerate}
\end{theorem}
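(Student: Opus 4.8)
The plan is to prove each item by an explicit construction of the weight matrices, organizing the ten operations around the two independent transport mechanisms available in a single layer and then arguing that every listed operation is a composition of them. First I would isolate what each block can do. The feed-forward block $\mathrm{FFN}(\mathbf{P})=\mathbf{W}_2\,\mathrm{ReLU}(\mathbf{W}_1\mathbf{P}+\mathbf{b}_1\mathbf{1}^\top)+\mathbf{b}_2\mathbf{1}^\top$ acts on each column (token) independently, so it can realize any column-wise map: I would represent linear feature maps exactly via $x=\mathrm{ReLU}(x)-\mathrm{ReLU}(-x)$, and invoke universal approximation of continuous functions on the compact sets $\mathcal{K}_n,\mathcal{K}_{n\times n}$ for the nonlinear ones (pointwise product, and quotient on the region where $b_i$ is bounded away from $0$). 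The attention block, by contrast, is the only component that moves information between columns: the factor $\mathrm{softmax}(\mathbf{P}^\top\mathbf{W}_K^\top\mathbf{W}_Q\mathbf{P})$ is a column-stochastic matrix $\mathbf{S}$, and $\mathbf{W}_V\mathbf{P}\,\mathbf{S}$ redistributes the columns of $\mathbf{W}_V\mathbf{P}$.

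The second ingredient is a softmax-hardening argument, which is also the source of the approximate ($\approx$) equality throughout. The appended identity block $[\mathbf{e}_1\ \cdots\ \mathbf{e}_{n+1}]$ is a positional code; choosing $\mathbf{W}_K,\mathbf{W}_Q$ so that the logit $\mathbf{P}^\top\mathbf{W}_K^\top\mathbf{W}_Q\mathbf{P}$ reads only these codes and is scaled by a large factor $\lambda$, I would force each column of $\mathbf{S}$ to concentrate on a prescribed target column, so that $\mathbf{S}$ approximates any desired $0/1$ selection (in particular a column permutation) with error $O(e^{-c\lambda})$ uniformly over the compact domains. Composing this selection with a suitable $\mathbf{W}_V$ then performs the required data movement. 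With these two lemmas the per-item constructions become routine: for (i) the head is switched off ($\mathbf{W}_V=\mathbf{0}$, so $\mathrm{Attn}(\mathbf{P})=\mathbf{P}$) and the FFN writes $a_i\,\square\,b_i$ into the third row column-wise. The row shift in (ii) is likewise a purely column-wise feature swap realized by the FFN, whereas the column shift genuinely relocates tokens and therefore uses attention; two heads are needed because the residual connection retains the original column, so one head routes the swapped payload ($+\mathbf{b}$ into column $2$, $+\mathbf{a}$ into column $3$) while a second cancels the retained term ($-\mathbf{a}$, $-\mathbf{b}$), a sign pattern a single non-negative softmax cannot produce.

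For the transpose, inner/outer product and matrix-product items I would exploit that $(\mathbf{A}^\top\mathbf{B})_{ij}=(\mathrm{col}_i\mathbf{A})^\top(\mathrm{col}_j\mathbf{B})$ consists of inner products of the tokens already stored column-wise, so one layer suffices: attention (driven by the positional code) aligns the paired coordinates into a common column and the FFN forms the sums of products, reusing the inner-product construction; the outer product $a_ib_j$ is obtained the same way by relocating $\mathbf{a}$ into a column and scaling by $b_j$ in the FFN. The vector and matrix transpositions are handled directly by the selection mechanism of the hardening lemma, which moves each $a_i$ (resp.\ each entry of $\mathbf{A}$) from its row position into the designated target column/row, the two heads moving the payload while respecting the row placement dictated by $\mathbf{W}_V$. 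Finally $\mathbf{A}\mathbf{B}$ and $\mathbf{A}\mathbf{b}$ require two layers because $\mathbf{A}\mathbf{B}=(\mathbf{A}^\top)^\top\mathbf{B}$: the first layer transposes $\mathbf{A}$ in place and the second applies the one-layer $\mathbf{A}^\top\mathbf{B}$ routine.

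I expect the main obstacle to be the transpose-type operations (vector transpose, matrix transpose, and the products built on them). There the attention must route data across the token (column) and feature (row) axes at once, yet it has only a column-independent value matrix $\mathbf{W}_V$ and a column-stochastic, non-negative matrix $\mathbf{S}$ at its disposal; exhibiting positional codes and weights that realize the exact permutation, and bounding the accumulated softmax error uniformly over $\mathcal{K}_n$ and $\mathcal{K}_{n\times n}$ so that the final $\approx$ can be made arbitrarily tight, is the delicate part. Establishing the softmax-hardening lemma with explicit, dimension-free error bounds is therefore the step I would develop most carefully, after which the remaining items follow by assembling it with the FFN universal-approximation estimate.
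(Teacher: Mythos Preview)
Your treatment of (i) and of the row shift in (ii) matches the paper: zero out the attention and let the FFN act column-wise, appealing to universal approximation on the compact domain. Your two-layer reduction of $\mathbf{A}\mathbf{B}$ and $\mathbf{A}\mathbf{b}$ to a transpose followed by $\mathbf{A}^\top\mathbf{B}$ is also the right idea. The gap is in the mechanism you propose for the remaining items.

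Your softmax-hardening lemma drives $\mathbf{S}=\mathrm{softmax}(\mathbf{P}^\top\mathbf{W}_K^\top\mathbf{W}_Q\mathbf{P})$ to a column-stochastic $0/1$ matrix, so that $\mathbf{W}_V\mathbf{P}\,\mathbf{S}$ is a column selection of $\mathbf{W}_V\mathbf{P}$. This is enough for the column shift (and your two-head add/cancel argument there is valid), but it cannot realize the transpose or any of the bilinear operations in one layer with two heads. Take vector transpose: you must place $a_i$ in row $i$ of the last output column, yet $\mathbf{W}_V$ is column-independent and $\mathbf{s}_{n+1}$ is a single probability vector, so the output column is one fixed linear combination of the input columns; no $0/1$ choice of $\mathbf{s}_{n+1}$ puts different $a_i$'s in different rows. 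The same obstruction blocks $\mathbf{a}^\top\mathbf{b}$ and $\mathbf{A}^\top\mathbf{B}$: your phrase ``attention aligns the paired coordinates into a common column and the FFN forms the sums of products'' cannot work, because the FFN is column-wise and cannot sum across tokens, and routing all of $\mathbf{A}$ into one column exceeds the $O(n)$ embedding.

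The trick you are missing, and which the paper uses throughout (ii)--(iv), is not hardening but \emph{linearization} of the softmax. One designs $\mathbf{W}_K,\mathbf{W}_Q$ so that the logit $\mathbf{Z}$ has a single dominant row equal to a large constant $C$ and the remaining entries carry the payload scaled by a small $c$ (for instance $Z_{i+1,n+1}=c\,a_i$, or $Z_{i+1,j+1}=c\,(\mathbf{A}^\top\mathbf{B})_{ij}$, since $\mathbf{Z}$ is bilinear in the columns). Then for those entries $(e^{C}\,\mathrm{softmax}(\mathbf{Z}))_{i,j}\approx 1+Z_{i,j}$; multiplying by $\mathbf{W}_V\mathbf{P}$ containing $e^{C}\mathbf{E}_n$ extracts the payload, and the second head (with the same structure but $c=0$) cancels the additive $1$. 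This is what lets a single attention layer compute transposes and bilinear forms directly from the logit, and it is the reason two heads are needed in those cases. Your hardening route discards the logit's data content and so cannot recover these operations within the stated layer and head budgets.
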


\begin{proof}
For (i), we consider the input prompt matrix:
\begin{equation}
\mathbf{P} = \begin{bmatrix}
    a_1 & \dots & a_n \\
    b_1 & \dots & b_n \\
    0 & \dots & 0
\end{bmatrix}.
\end{equation}
We take all attention parameters to be zero, i.e., $\mathbf{W}_{Q,K,V} = 0$, so that the attention module simply returns the input unchanged:
\begin{equation}
    \mathrm{Attn}(\mathbf{P}) = \mathbf{P}.
\end{equation}
We now consider how the FFN processes this matrix. Since FFN acts independently and same on each column, it suffices to analyze a single column of $\mathbf{P}$. For simplicity, we write FFN as acting on a column vector below, with the understanding that the same operation is applied to every column. 
By using the universal approximation theorem \cite[Theorem 2]{leshno1993multilayer}, for continuous function $\textbf{f} \in C(\mathcal{K}, \mathbb{R}^3)$ on a compact set $\mathcal{K} \subseteq \mathbb{R}^3$ and any $\varepsilon > 0$, there exists an FFN such that:
\begin{equation}
    \sup_{\mathbf{u} \in \mathcal{K}} \left\| \mathrm{FFN}(\mathbf{u}) - \mathbf{f}(\mathbf{u}) \right\| < \frac{\varepsilon}{n}, \quad \mathbf{f}(\mathbf{u})_r = 
\begin{cases}
    \mathbf{u}_1 \mathbin{\square} \mathbf{u}_2, & \text{if } r = 3 \\
    0, & \text{otherwise}
\end{cases},
\end{equation}
 where $\square \in \{+, -, \cdot, \div\}$ (with denominator $\neq 0$ when dividing). Therefore, for each column of $\mathbf{P}$, the FFN can be constructed to approximate the result of $a_j \square b_j$ and write it into the third row. Applying this column-wise to $\mathbf{P}$, we obtain:
\begin{equation}
    \left\| \mathrm{FFN}(\mathrm{Attn}(\mathbf{P})) - \mathbf{T} \right\| < \varepsilon, \quad \text{where } 
    \mathbf{T} = \begin{bmatrix}
        0 & \dots & 0 \\
        0 & \dots & 0 \\
        d_1 & \dots & d_n
    \end{bmatrix}, \quad d_j = a_j \square b_j.
\end{equation}
Thus, we obtain the final result, where the approximation comes from the FFN:
\begin{equation}
    \mathrm{TF}(\mathbf{P}) = \mathrm{Attn}(\mathbf{P}) + \mathrm{FFN}(\mathrm{Attn}(\mathbf{P})) \approx \mathbf{P} + \mathbf{T} = 
    \begin{bmatrix}
        a_1 & \dots & a_n \\
        b_1 & \dots & b_n \\
        d_1 & \dots & d_n
    \end{bmatrix},
\end{equation}
Remark that if $d_i$ is replaced by a function $f(a_i, b_i)$, where $f$ is any composition of addition, subtraction, multiplication, or division, the result still holds.

For (ii), the input prompt is depicted as follows:
    $$ \mathbf{P}=\begin{bmatrix}
        \textbf{0}&\mathbf{a}&\mathbf{b}\\
 \textbf{0}&\textbf{0}&\textbf{0}\\
\mathbf{e}_1&\mathbf{e}_{2}&\mathbf{e}_{3}
    \end{bmatrix} .    $$
We construct 
$$\mathbf{W}_{Q}=\begin{bmatrix}
    \mathbf{0}& C&C&C\\
    \mathbf{0}&0&0&c\\
    \mathbf{0}&0&c&0
\end{bmatrix}, \ \mathbf{W}_{Q}\mathbf{P}=\begin{bmatrix}
 C&C&C\\
0&0&c\\
0&c&0
\end{bmatrix}, \ 
\mathbf{W}_{K}=\begin{bmatrix}
    \mathbf{0}&\mathbf{E}_{3}
\end{bmatrix}, \ \mathbf{W}_{K}\mathbf{P}=
\begin{bmatrix}
\mathbf{E}_{3}
\end{bmatrix},
$$
$$
\mathbf{W}_{V}=e^{C}\begin{bmatrix}
    \mathbf{0}&\mathbf{0}\\
    \mathbf{E}_{n}&\mathbf{0}\\
    \mathbf{0}&\mathbf{0}
\end{bmatrix}, \quad \mathbf{W}_{V}
\mathbf{P}=e^{C}\begin{bmatrix}
 \textbf{0}&\textbf{0}&\textbf{0}\\
        \textbf{0}&\mathbf{a}&\mathbf{b}\\
\mathbf{0}&\mathbf{0}&\mathbf{0}
    \end{bmatrix},
$$
where $C,\ c$ are positive constants, and the $n$-by-$n$ 
identity matrix $\mathbf{E}_{n}$. Then we mark:
$$
\mathbf{Z}=\mathbf{P}^{T}\mathbf{W}^{T}_{K}\mathbf{W}_{Q}\mathbf{P}=
\begin{bmatrix}
 C&C&C\\
0&0&c\\
0&c&0
\end{bmatrix}.
$$
By using similar deduction in \cite{gao2024expressive,Giannou2023LoopedTA}, we know the following equation holds for $2\leq i,j\leq 3$:
\begin{equation}
    (e^{C}\text{softmax}(\mathbf{Z}))_{i,j}\approx1+{Z}_{i,j},
\end{equation}
where the approximation can be arbitrarily well, as $C$ is sufficiently large and $c$ is sufficiently small. After we introduce another head to cancel the constant (see
\cite{gao2024expressive}), we obtain that the output of the attention layer: 
\begin{equation}
    \sum_{h=1}^{2}\mathbf{W}_{V}^{(h)}\mathbf{P} \cdot \text{softmax}(\mathbf{P}^{T}\mathbf{W}^{(h)T}_{K}\mathbf{W}_{Q}^{(h)}\mathbf{P})\approx c\begin{bmatrix}
 \textbf{0}&\textbf{0}&\textbf{0}\\
        \textbf{0}&\mathbf{b}&\mathbf{a}\\
\mathbf{0}&\mathbf{0}&\mathbf{0}
    \end{bmatrix}.
\end{equation}
Therefore, we have  
\begin{equation}
    \text{Attn}(\mathbf{P})\approx\mathbf{U}=\begin{bmatrix}
        \textbf{0}&\mathbf{a}&\mathbf{b}\\
 \textbf{0}&c\textbf{b}&c\textbf{a}\\
\mathbf{e}_1&\mathbf{e}_{2}&\mathbf{e}_{3}
    \end{bmatrix}.
\end{equation}

First, let $r_{1}$ denotes the row containing $\mathbf{a}$ and $\mathbf{b}$, 
$r_{2}$ denotes the row containing $c\mathbf{b}$ and $c\mathbf{a}$ in $\mathbf{U}$. 
Although the FFN takes the matrix as input, it processes each column independently and identically. Thus, it suffices to prove the approximation for a single column; the same applies to all others. For simplicity, we treat FFN as acting on a column vector below, with the understanding that the same operation is applied to every column. By using the universal approximation theorem \cite[Theorem 2]{leshno1993multilayer}, for all $\varepsilon>0$, compact $\mathcal{K}\subseteq \mathbb{R}^{2n+3}$, the FFN can approximate the continuous function $\mathbf{f}\in C(\mathcal{K},\mathbb{R}^{2n+3})$:
\begin{equation}
\label{elimate}
\sup_{\textbf{u} \in \textbf{K}} \left\| \text{FFN}(\textbf{u}) - \textbf{f}(\mathbf{u}) \right\| < \frac{\varepsilon}{3},\quad  \mathbf{f}(\mathbf{u})_r = 
\begin{cases}
    -\mathbf{u}_{r_{1}} + \frac{1}{c} \mathbf{u}_{r_{2}}, & \text{if } r = r_1 \\
    -\mathbf{u}_{r_{2}}, & \text{if } r = r_2 \\
    \mathbf{0}, & \text{otherwise}
\end{cases}.
\end{equation}
Therefore, by applying $\mathbf{f}$ column-wise to $\mathbf{U}$, we obtain:
\begin{equation}
    \left\| \mathrm{FFN}(\mathbf{U}) - \mathbf{T} \right\| < \varepsilon, \quad \text{where } 
    \mathbf{T} = \begin{bmatrix}
        \mathbf{0} & -\mathbf{a} + \mathbf{b} & -\mathbf{b} + \mathbf{a} \\
        \mathbf{0} & -c\mathbf{b} & -c\mathbf{a} \\
        \mathbf{0} & \mathbf{0} & \mathbf{0}
    \end{bmatrix}.
\end{equation}
Hence, we obtain the final result:
\begin{equation}
    \mathrm{TF}(\mathbf{P})  \approx \mathbf{U} + \mathbf{T}= \begin{bmatrix}
        \mathbf{0} & \mathbf{b} & \mathbf{a} \\
        \mathbf{0} & \mathbf{0} & \mathbf{0} \\
        \mathbf{e}_1 & \mathbf{e}_2 & \mathbf{e}_3
    \end{bmatrix}.
\end{equation}
The rest of the statements in (ii) can be proved similarly. 

For (iii), the input prompt is depicted as follows:
    $$ \mathbf{P}=\begin{bmatrix}
         {0}&&\mathbf{a}^{\top} & \\
        {0}&&\mathbf{b}^{\top} & \\
        {0}& 0 & \dots & {0}\\
        \mathbf{e}_{1}& \mathbf{e}_2 & \dots& \mathbf{e}_{n+1}
    \end{bmatrix} .    
    $$
    We construct: 
$$\mathbf{W}_{Q}=\begin{bmatrix}
    \mathbf{0}&\mathbf{0}&c\\
    \mathbf{0}&\mathbf{1}&1
\end{bmatrix}, \quad \mathbf{W}_{Q}\mathbf{P}=\begin{bmatrix}
    0&\dots&0&c\\
    1 & \dots&1&1
\end{bmatrix},
\quad \mathbf{W}_{K}=\begin{bmatrix}
    1&\mathbf{0}&0&\mathbf{0}\\
    0&\mathbf{0}&C&\mathbf{0}
\end{bmatrix},
$$
$$\mathbf{W}_{K}\mathbf{P}=\begin{bmatrix}
    0&\mathbf{a}^{\top} \\
   C&\mathbf{0} \\
\end{bmatrix}, \quad \mathbf{W}_{V}=\begin{bmatrix}
    0&0&\mathbf{0}\\
    0&1&\mathbf{0}\\
    \mathbf{0}&\mathbf{0}&\mathbf{0}
\end{bmatrix}, \quad \mathbf{W}_{V}\mathbf{P}=e^{C}\begin{bmatrix}
\mathbf{0}&\mathbf{0}\\
0&\mathbf{b}^{\top}\\
\mathbf{0}&\mathbf{0}
\end{bmatrix},
$$
    for positive constants $C,\ c$. By using similar argument in (ii), 
    we have 
  the desired results:  
 \begin{equation}
      \text{TF}(\mathbf{P})\approx        \begin{bmatrix}
\mathbf{0}& &\mathbf{a}^{\top} & \\
        \mathbf{0}&&\mathbf{b}^{\top}& \\
        {0}& 0 & \dots & \mathbf{a}^{\top}\mathbf{b}\\
        \mathbf{e}_{1}& \mathbf{e}_2 & \dots&  \mathbf{e}_{n+1}
        \end{bmatrix}.
 \end{equation}
 The rest of the statements in (iii) can be proved similarly. 
 
 For (iv), the input prompt is depicted as follows:
    $$ \mathbf{P}=\begin{bmatrix}
        \mathbf{0}& &\mathbf{A} &  \\
        \mathbf{0}&  &\mathbf{0} &  \\
        \mathbf{e}_{1}& \mathbf{e}_{2}&\dots&\mathbf{e}_{n+1}
    \end{bmatrix}.    $$
We construct 
$$\mathbf{W}_{Q}=\begin{bmatrix}
    \mathbf{0}&\mathbf{0}&c\mathbf{E}_{n}\\
    \mathbf{0}&1&\mathbf{1}
\end{bmatrix},\mathbf{W}_{Q}\mathbf{P}=\begin{bmatrix}
    \mathbf{0}&c\mathbf{E}_{n}\\
    1 &\mathbf{1}
\end{bmatrix}, \quad \mathbf{W}_{K}=\begin{bmatrix}
    \mathbf{E}_{n}&\mathbf{0}&\mathbf{0}&\mathbf{0}\\
    \mathbf{0}&\mathbf{0}&C&\mathbf{0}
\end{bmatrix},
$$
$$
\mathbf{W}_{K}\mathbf{P}=\begin{bmatrix}
    \mathbf{0}& &\mathbf{A} &  \\
   C&0 &\dots &  0\\
\end{bmatrix}, \quad \mathbf{W}_{V}=e^{C}\begin{bmatrix}
\mathbf{0}&\mathbf{0}\\
    \mathbf{0}&\mathbf{E}_{n}\\
    \mathbf{0}&\mathbf{0}
\end{bmatrix}, \quad \mathbf{W}_{V}\mathbf{P}=e^{C}\begin{bmatrix}
\mathbf{0}&\mathbf{0}\\
    \mathbf{0}&\mathbf{E}_{n}\\
    \mathbf{0}&\mathbf{0}
\end{bmatrix},$$
for positive constants $C,\ c$. By using deduction similar to (ii) and the results in 
\cite{gao2024expressive,Giannou2023LoopedTA}, we have the desired result:
\begin{equation}
     \text{TF}(\mathbf{P})\approx\begin{bmatrix}
        \mathbf{0}&&\mathbf{A}^{\top}&\\
         \mathbf{0}&& \mathbf{0}&\\
        \mathbf{e}_{1}&\mathbf{e}_{2}&\dots&\mathbf{e}_{n+1}
    \end{bmatrix}.
\end{equation}
The rest of the statements in (iv) can be proved similarly.
\end{proof}

Next, we briefly describe the requirements of the loop-transformer, as summarized in \cref{Tablecopmpare}. First, according to Theorem 4 in the paper \cite{Giannou2023LoopedTA}, performing $+,-,\cdot,\div$ between two elements requires an input size of $O(n)$ and 12 layers. Thus, performing full pointwise operations ($+,-,\cdot,\div$) between two vectors of length $n$ requires an input size of $O(n)$ and $12n$ layers. For row shifting, according to Lemmas 2 and 3 in the paper \cite{Giannou2023LoopedTA}, swapping a single element between two row vectors requires two read and write operations, each with an input size of $O(n \log n)$ and one layer. Therefore, performing a full row swap requires an input size of $O(n \log n)$ and 4n layers of the transformer. The column shift can be deduced similarly. According to Lemma 6 in \cite{Giannou2023LoopedTA}, matrix transposition requires an input size of $O(n^3)$ and four layers of the transformer. For matrix multiplication $\mathbf{A}^{\top}\mathbf{B}$, according to Lemma 20 and Lemma 6 in \cite{Giannou2023LoopedTA}, the matrix $\mathbf{B}$ must be first transposed before matrix multiplication to obtain the result. Therefore, the final requirement is an input size of $O(n^3)$ and 6 layers. The requirement of $\mathbf{A}\mathbf{B}$ can be deduced similarly. For all other vector operations, the vector is first zero-padded into a square matrix, and then matrix theorems are applied; thus, the requirements are similar to those for matrices.

According to \cref{TheAlgformerTheorembasicopr}, we demonstrate that the transformer architecture is sufficiently expressive to represent essential numerical operators. This finding confirms that transformers can effectively embody fundamental numerical linear algebra operations within a single cohesive framework, highlighting their potential as a unified solver architecture.
NLAFormer discards the simulation of the computer control flow adopted by loop-transformer \cite{Giannou2023LoopedTA}, significantly reducing both the size of the input matrix and the required number of layers. 

\section{Learning the Conjugate Gradient Algorithm}
\label{sec:Evaluating NLAFormer}

This section demonstrates how NLAFormer can assemble numerical linear algebra operations and learn the conjugate gradient algorithm for solving symmetric positive definite systems.
Solving systems of linear equations is a fundamental task in numerical linear algebra. The objective is to find the solution vector $\mathbf{x}$ that satisfies $\mathbf{Ax} = \mathbf{b}$, where $\mathbf{A} = [\mathbf{a}_1, \dots, \mathbf{a}_n]^T \in \mathbb{R}^{n \times n}$ is a positive definite matrix, and $\mathbf{b} = [b_1, \dots, b_n]^T \in \mathbb{R}^n$ is the corresponding vector on the right side.
The iterative process of the CG method is as follows:
\begin{equation}
\label{CGalg}
\begin{aligned}
 \mathbf{d}_{0}&=\mathbf{r}_{0}=\mathbf{b}-\mathbf{A}\mathbf{x}_{0},\quad k=0  , \\
 \alpha_{k}&=\frac{\left\| \mathbf{r}_{k}\right \|^2}{\mathbf{d}_{k}^{T}A\mathbf{d}_{k}} ,\quad \mathbf{x}_{k+1}=\mathbf{x}_{k}+\alpha_{k}\mathbf{d}_{k},\\
 \mathbf{r}_{k+1}&=\mathbf{r}_{k}-\alpha_{k}\mathbf{A}\mathbf{d}_{k},\quad \beta_{k+1}=\frac{\left\| \mathbf{r}_{k+1}\right \|^2}{\left\| \mathbf{r}_{k}\right \|^2},\\
 \mathbf{d}_{k+1}&=\mathbf{r}_{k+1}+\beta_{k+1}\mathbf{d}_{k},\quad k=k+1,
\end{aligned}
\end{equation}
as introduced in \cite{shewchuk1994introduction}.

The conjugate gradient method inherently follows an iterative dependency structure, with each step building on the results of previous iterations.
To align with the workflow of iterative numerical algorithms, NLAFormer is composed of three sequential transformer blocks: a pre-processing block ($\text{TF}_{\text{pre}}$), an iterative loop block ($\text{TF}_{\text{loop}}$), and a post-processing block ($\text{TF}_{\text{post}}$). The complete architecture is deonated as $\text{NLAF}$ and is defined as follows:
\begin{equation}
\label{TFstructure}
\text{NLAF}(\mathbf{P})= \text{TF}_{\text{post}} \Big( \underbrace{\text{TF}_{\text{loop}} \big( \cdots \text{TF}_{\text{loop}}}_{\text{ looping}} ( \text{TF}_{\text{pre}} (\mathbf{P}) ) \cdots \big) \Big).
\end{equation}
The truncated variants are denoted by $\text{NLAF}^t$, for $t = 0, \dots, T$. 
Specifically, $\text{NLAF}^0$ includes only the pre-processing block $\text{TF}_{\text{pre}}$. For $1 \le t \le T - 1$, $\text{NLAF}^t$ consists of $\text{TF}_{\text{pre}}$ followed by $t$ applications of the loop block $\text{TF}_{\text{loop}}$. Finally, $\text{NLAF}^T$ executes the complete pipeline.
An illustration of the structured iterative model is provided in \cref{Structured iterative model diagram}.

\begin{figure}[h]
    \centering
    \includegraphics[width=1\linewidth]{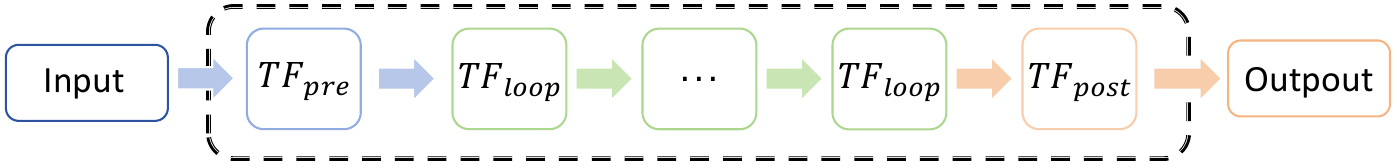}
    \caption{Structured iterative model diagram}
    \label{Structured iterative model diagram}
\end{figure}

The preprocessing module corresponds to the initialization and input data preparation steps in the algorithm, serving to prepare the data for subsequent processing. The loop module performs multiple iterations, shares consistent parameters between them, and encapsulates the core iterative computations. Finally, the postprocessing module performs the final iteration and outputs the result in the required format. The effectiveness of this structured iterative module design has been validated in the paper \cite{gao2024expressive}.
This design mimics the control flow of traditional iterative solvers but differs from conventional approaches that rely on explicitly hand-coded rules. Instead, each transformer module learns to represent the underlying update rules by observing numerical patterns during training. This ability to learn underlying procedures from data is precisely the capability that we seek to develop for tasks in numerical linear algebra. 

The following theorem demonstrates that there exists an NLAFormer capable of expressing the conjugate gradient algorithm. 

\begin{theorem}
\label{TheAlgformer4CG}
There exists a specially crafted transformer module, comprised of $TF_{pre}$ (a single-layer, four-head transformer) and $TF_{loop}$ (a single-layer, two-head transformer), that can execute the conjugate gradient method, as outlined in Equation \cref{CGalg}, for solving linear systems.
\end{theorem}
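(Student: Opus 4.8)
The plan is to realize each line of the conjugate gradient recursion \cref{CGalg} as a composition of the basic operations established in \cref{TheAlgformerTheorembasicopr}, and then to package one full CG iteration into a single $\text{TF}_{\text{loop}}$ block while relegating the initialization $\mathbf{r}_0=\mathbf{d}_0=\mathbf{b}-\mathbf{A}\mathbf{x}_0$ to $\text{TF}_{\text{pre}}$. The key observation is that every quantity appearing in CG is built from exactly the primitives we already have: matrix-vector products $\mathbf{A}\mathbf{d}_k$, inner products $\|\mathbf{r}_k\|^2=\mathbf{r}_k^\top\mathbf{r}_k$ and $\mathbf{d}_k^\top(\mathbf{A}\mathbf{d}_k)$, the scalar divisions defining $\alpha_k$ and $\beta_{k+1}$, and the pointwise axpy-type updates $\mathbf{x}_{k+1}=\mathbf{x}_k+\alpha_k\mathbf{d}_k$, $\mathbf{r}_{k+1}=\mathbf{r}_k-\alpha_k\mathbf{A}\mathbf{d}_k$, and $\mathbf{d}_{k+1}=\mathbf{r}_{k+1}+\beta_{k+1}\mathbf{d}_k$. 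Since the pointwise part of \cref{TheAlgformerTheorembasicopr}(i) permits any composition of $+,-,\cdot,\div$ in the FFN (as noted in the remark following its proof), a single FFN can compute a scalar-times-vector-plus-vector update column-wise once the scalar $\alpha_k$ (or $\beta_{k+1}$) is broadcast into the appropriate row of the prompt.

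First I would fix a prompt layout that stacks, in designated row-blocks, the persistent iterates $\mathbf{x}_k,\mathbf{r}_k,\mathbf{d}_k$ together with the fixed data $\mathbf{A},\mathbf{b}$ and a positional block of unit vectors $\mathbf{e}_1,\dots,\mathbf{e}_{n+1}$ (exactly as in parts (iii)--(iv)), reserving blank scratch rows to hold intermediate results such as $\mathbf{A}\mathbf{d}_k$ and the scalars. The block $\text{TF}_{\text{pre}}$ computes $\mathbf{A}\mathbf{x}_0$ via the two-layer matrix-vector construction of part (iv) and then, by an FFN, forms $\mathbf{r}_0=\mathbf{b}-\mathbf{A}\mathbf{x}_0$ and copies it into the $\mathbf{d}_0$ slot; four heads suffice here because the matrix-vector routine and the bookkeeping copies consume the attention budget. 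Then I would show $\text{TF}_{\text{loop}}$ executes one CG step: the two attention heads perform the $\mathbf{A}\mathbf{d}_k$ multiplication and the inner-product extractions (writing $\|\mathbf{r}_k\|^2$ and $\mathbf{d}_k^\top\mathbf{A}\mathbf{d}_k$ into scratch rows, in the style of part (iii)), after which the single FFN layer computes $\alpha_k$ by division and performs the three vector updates in one shot via its arbitrary-arithmetic-composition capability, overwriting the $\mathbf{x},\mathbf{r},\mathbf{d}$ blocks. Finally $\text{TF}_{\text{post}}$ is an unrolled last loop iteration that additionally reads out $\mathbf{x}_{T}$ in the required output format.

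The main obstacle is reconciling the layer/head budget claimed in the statement ($\text{TF}_{\text{pre}}$ single-layer four-head, $\text{TF}_{\text{loop}}$ single-layer two-head) with the fact that several constituent operations were themselves proved to need two layers (matrix-vector multiplication in part (iv)) or to require distinct prompt embeddings. The crux is that the earlier lemmas are stated for an \emph{isolated} operation starting from a clean prompt, whereas inside the loop these operations must run in parallel within a single layer on a shared prompt; I would therefore need to argue that the softmax-attention constructions of parts (iii)--(iv) can be \emph{superposed} across disjoint row- and column-blocks using independent heads (the $C\to\infty,\ c\to 0$ asymptotic making cross-block interference negligible), and that the several scalar/vector arithmetic steps collapse into one FFN because universal approximation applies to the single composite continuous map on the compact prompt set. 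A secondary subtlety is the division step: $\alpha_k$ and $\beta_{k+1}$ must stay bounded away from the singularities of $\div$, which is guaranteed by positive-definiteness of $\mathbf{A}$ (so $\mathbf{d}_k^\top\mathbf{A}\mathbf{d}_k>0$) together with the compactness assumption $\mathcal{K}_{n},\mathcal{K}_{n\times n}$, keeping all arguments of the FFN within a compact set on which the approximation error $\varepsilon$ can be made uniform; propagating this $\varepsilon$ through $T$ loop iterations while retaining an overall approximation guarantee is the final technical point to address.
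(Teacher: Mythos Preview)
Your high-level decomposition is right, and you correctly identify the crux: the single-layer budget for $\text{TF}_{\text{loop}}$ seems incompatible with the two-layer matrix--vector routine of \cref{TheAlgformerTheorembasicopr}(iv). But your proposed resolution --- superposing heads across disjoint blocks --- does not address this, because the two layers in part~(iv) are \emph{sequential} (transpose, then multiply), not parallel; no amount of head-superposition collapses a depth-two pipeline into depth one.

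The paper resolves this by a different prompt layout. In \cref{TheAlgformerTheorembasicopr}(iv) the vector $\mathbf{b}$ is stored as a \emph{row} $\mathbf{b}^\top$, which is why a transpose layer is needed first. In the CG construction of \cref{lemma1StepCG}, the iterates $\mathbf{d}_k,\mathbf{x}_k,\mathbf{r}_k$ are instead stacked as \emph{column} vectors in the last column of the prompt. With $\mathbf{d}_k$ already a column, one head (plus its constant-cancelling partner) computes $\mathbf{A}\mathbf{d}_k$ directly in a single layer via the $\mathbf{W}_K^\top\mathbf{W}_Q$ mechanism --- effectively the one-layer $\mathbf{A}^\top\mathbf{B}$ construction specialized to a single column, which for symmetric $\mathbf{A}$ gives $\mathbf{A}\mathbf{d}_k$. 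The same idea lets $\text{TF}_{\text{pre}}$ produce $\mathbf{A}\mathbf{x}_0$ in one layer; the other two of its four heads are used to move $\mathbf{b}$ (which enters as a row) into the last column.

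The second point you miss is that attention is \emph{not} used for the inner products at all. After attention, the last column of the prompt contains $\mathbf{d}_k,\mathbf{x}_k,\mathbf{r}_k$ and $c\,\mathbf{A}\mathbf{d}_k$ simultaneously. Since the FFN acts column-wise, the map
\[
(\mathbf{d}_k,\mathbf{x}_k,\mathbf{r}_k,\mathbf{A}\mathbf{d}_k)\ \longmapsto\ (\mathbf{d}_{k+1}-\mathbf{d}_k,\ \mathbf{x}_{k+1}-\mathbf{x}_k,\ \mathbf{r}_{k+1}-\mathbf{r}_k,\ -c\,\mathbf{A}\mathbf{d}_k)
\]
is a single continuous function of one column vector (it internally forms $\|\mathbf{r}_k\|^2$, $\mathbf{d}_k^\top\mathbf{A}\mathbf{d}_k$, $\alpha_k$, $\beta_{k+1}$), and universal approximation handles it in one FFN. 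Your plan to extract the inner products via the part-(iii) attention gadget would require the vectors to be laid out as rows, clashing with the column layout needed for the one-layer matrix--vector product, and would overrun the two-head budget. Once you adopt the column layout, the two heads of $\text{TF}_{\text{loop}}$ do exactly one job --- produce $\mathbf{A}\mathbf{d}_k$ --- and the FFN does everything else.
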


Let us first establish the following results that a single-layer, two-head transformer is capable of executing a single step of the conjugate gradient method when employed with a specific configuration of input. 

\begin{lemma}
\label{lemma1StepCG}
A single-layer, dual-head transformer has the capability to execute a single iteration of
the conjugate gradient method for solving linear systems as described in Equation \cref{CGalg}.
\end{lemma}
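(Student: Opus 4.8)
The plan is to realize one sweep of \cref{CGalg} as a single attention block followed by a single position-wise FFN block, dividing the work according to what each component natively expresses: the attention layer produces the \emph{bilinear} quantities (one matrix--vector product and a few inner products), while the FFN produces the \emph{pointwise} scalar arithmetic (the divisions defining $\alpha_k,\beta_{k+1}$ and the affine vector combinations $\mathbf{x}_{k+1},\mathbf{r}_{k+1},\mathbf{d}_{k+1}$). I would encode the state as a prompt whose columns are indexed $1,\dots,n{+}1$ and which carries the symmetric matrix $\mathbf{A}$ stored column-wise, the iterates, a block of zeroed workspace rows, and the unit-vector positional encodings $\mathbf{e}_1,\dots,\mathbf{e}_{n+1}$, exactly in the style of the prompts of \cref{TheAlgformerTheorembasicopr}. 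The single structural fact that makes one layer conceivable is the symmetry $\mathbf{A}=\mathbf{A}^\top$ of an SPD matrix: it turns any $\mathbf{A}\mathbf{v}$ into $\mathbf{A}^\top\mathbf{v}$, which is a \emph{one-layer} operation by the $\mathbf{A}^\top\mathbf{B}$ construction in part (iv), rather than the two-layer construction needed for a generic $\mathbf{A}\mathbf{b}$.

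In the attention stage I would reuse the large-$C$ linearization $e^{C}\,\mathrm{softmax}(\mathbf{Z})_{i,j}\approx 1+Z_{i,j}$ together with a second head that cancels the additive constant, exactly as in the proofs of (ii)--(iv). Placing $\mathbf{A}$ in the key block and a residual/direction vector in the query block makes the score matrix $\mathbf{Z}=(\mathbf{W}_K\mathbf{P})^\top(\mathbf{W}_Q\mathbf{P})$ carry the entries of a matrix--vector product, which the value routing writes into the workspace rows; pairing the same linearized scores with a value block that carries one of the vectors and summing over key positions, exactly as in the inner-product construction of part (iii), deposits the required scalars into designated slots, which I would then broadcast across all columns. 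Once the attention residual leaves each column holding the needed scalars, the local entry of the computed matrix--vector product, and the original iterate entries, the FFN step becomes a single continuous map applied independently to every column, handled by the universal-approximation argument of part (i) and its remark that any composition of $+,-,\cdot,\div$ is admissible; accumulating the softmax-linearization error and the FFN error and taking $C$ large, $c$ small, and the FFN width large would yield the $\varepsilon$-accurate output.

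The hard part is that a naive per-iteration schedule does not fit in one layer, and identifying why pins down exactly what must be arranged. Computing $\mathbf{A}\mathbf{d}_k$ and $\mathbf{d}_k^\top\mathbf{A}\mathbf{d}_k$ requires $\mathbf{d}_k$, but $\mathbf{d}_k=\mathbf{r}_k+\beta_k\mathbf{d}_{k-1}$ is itself the output of a division (hence of the FFN), which lies \emph{after} the attention within the layer; and the final update $\mathbf{d}_{k+1}=\mathbf{r}_{k+1}+\beta_{k+1}\mathbf{d}_k$ needs $\beta_{k+1}=\|\mathbf{r}_{k+1}\|^2/\|\mathbf{r}_k\|^2$, i.e.\ the norm of the already-updated residual, which is a sum across columns that the position-wise FFN cannot form after it has produced $\mathbf{r}_{k+1}$, and whose expansion $\|\mathbf{r}_k\|^2-2\alpha_k\mathbf{r}_k^\top\mathbf{A}\mathbf{d}_k+\alpha_k^2\|\mathbf{A}\mathbf{d}_k\|^2$ exposes the degree-four term $\|\mathbf{A}\mathbf{d}_k\|^2$, beyond the degree-three reach of a single attention layer (bilinear scores times linear values).

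My plan to overcome this is to carry an \emph{augmented} state $(\mathbf{x}_k,\mathbf{r}_k,\mathbf{d}_{k-1},\mathbf{A}\mathbf{d}_{k-1},\|\mathbf{r}_{k-1}\|^2)$ between loop blocks and to rewrite the step so that every quantity the attention must compute is a bilinear form of the \emph{carried} vectors $\mathbf{r}_k$ and $\mathbf{A}\mathbf{d}_{k-1}$ only. Concretely, I would have the attention emit the single matrix--vector product $\mathbf{A}\mathbf{r}_k=\mathbf{A}^\top\mathbf{r}_k$ and the scalars $\|\mathbf{r}_k\|^2$, $\mathbf{r}_k^\top\mathbf{A}\mathbf{r}_k$, and $\mathbf{r}_k^\top\mathbf{A}\mathbf{d}_{k-1}$, all of degree at most three; the FFN then forms $\beta_k=\|\mathbf{r}_k\|^2/\|\mathbf{r}_{k-1}\|^2$, recovers $\mathbf{A}\mathbf{d}_k=\mathbf{A}\mathbf{r}_k+\beta_k\mathbf{A}\mathbf{d}_{k-1}$ and $\mathbf{d}_k=\mathbf{r}_k+\beta_k\mathbf{d}_{k-1}$ column-wise by linearity, obtains $\mathbf{d}_k^\top\mathbf{A}\mathbf{d}_k=\mathbf{r}_k^\top\mathbf{A}\mathbf{r}_k+\beta_k\,\mathbf{r}_k^\top\mathbf{A}\mathbf{d}_{k-1}$ from the conjugate-gradient relations $\mathbf{d}_{k-1}^\top\mathbf{A}\mathbf{d}_k=0$ and $\mathbf{d}_k=\mathbf{r}_k+\beta_k\mathbf{d}_{k-1}$, and finally computes $\alpha_k$, $\mathbf{x}_{k+1}=\mathbf{x}_k+\alpha_k\mathbf{d}_k$, and $\mathbf{r}_{k+1}=\mathbf{r}_k-\alpha_k\mathbf{A}\mathbf{d}_k$, passing $(\mathbf{x}_{k+1},\mathbf{r}_{k+1},\mathbf{d}_k,\mathbf{A}\mathbf{d}_k,\|\mathbf{r}_k\|^2)$ onward. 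The remaining effort is to check that this reindexed schedule is self-consistent (with the first step supplied by the four-head $\mathrm{TF}_{\mathrm{pre}}$), that the two heads suffice to emit the one vector and three scalars under one shared large-$C$ approximation, and that the blocks compose into the full pipeline of \cref{TheAlgformer4CG}.
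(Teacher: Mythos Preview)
Your diagnosis of the ``hard part'' rests on a misconception about the encoding. In the paper's prompt for the loop block, the iterate vectors $\mathbf{d}_k,\mathbf{x}_k,\mathbf{r}_k$ are not laid out as rows spread across the $n{+}1$ columns; they are all packed into the \emph{same} column (the last one), each occupying its own contiguous block of rows. The attention block then appends $\mathbf{A}\mathbf{d}_k$ as a further column block in column $n{+}1$, via the $\mathbf{A}^\top\mathbf{B}$-style construction you cite from \cref{TheAlgformerTheorembasicopr}(iv). After that, the last column contains every coordinate of $\mathbf{d}_k,\mathbf{x}_k,\mathbf{r}_k,\mathbf{A}\mathbf{d}_k$ simultaneously, and the column-wise FFN therefore sees them all at once. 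The map
\[
(\mathbf{d}_k,\mathbf{x}_k,\mathbf{r}_k,\mathbf{A}\mathbf{d}_k)\ \longmapsto\ (\mathbf{d}_{k+1}-\mathbf{d}_k,\ \mathbf{x}_{k+1}-\mathbf{x}_k,\ \mathbf{r}_{k+1}-\mathbf{r}_k,\ -c\,\mathbf{A}\mathbf{d}_k)
\]
is a single continuous function of that one column: the scalars $\|\mathbf{r}_k\|^2$, $\mathbf{d}_k^\top\mathbf{A}\mathbf{d}_k$, $\alpha_k$, $\|\mathbf{r}_{k+1}\|^2=\|\mathbf{r}_k-\alpha_k\mathbf{A}\mathbf{d}_k\|^2$, and $\beta_{k+1}$ are all rational functions of those entries. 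One invocation of the universal approximation theorem handles it. There is no ``sum across columns'' obstacle, no degree-four bottleneck, and no need to precompute inner products in attention.

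Consequently, the reindexing you propose---carrying the augmented state $(\mathbf{x}_k,\mathbf{r}_k,\mathbf{d}_{k-1},\mathbf{A}\mathbf{d}_{k-1},\|\mathbf{r}_{k-1}\|^2)$, invoking the conjugacy identity $\mathbf{d}_{k-1}^\top\mathbf{A}\mathbf{d}_k=0$, and asking attention to emit $\mathbf{A}\mathbf{r}_k$ together with three separate scalars---is unnecessary, and it is also where your argument develops a real gap. Your assertion that ``two heads suffice to emit the one vector and three scalars under one shared large-$C$ approximation'' is not substantiated. A single computational head fixes one score matrix $\mathbf{Z}$; if its $(j{+}1,n{+}1)$ entries are chosen to be $(\mathbf{A}\mathbf{r}_k)_j$ so that the value routing yields the matrix--vector product, they cannot simultaneously equal $r_{k,j}$ (needed for $\|\mathbf{r}_k\|^2$) or $(\mathbf{A}\mathbf{d}_{k-1})_j$ (needed for $\mathbf{r}_k^\top\mathbf{A}\mathbf{d}_{k-1}$), and the second head is already spent on the constant-cancellation trick. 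The paper sidesteps all of this: attention computes only $\mathbf{A}\mathbf{d}_k$, the two heads are exactly the compute/cancel pair, and the FFN does the rest on the last column.
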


\begin{proof}
    We consider the following input prompt:
    \begin{equation}
            \mathbf{P}=\begin{bmatrix}
        \textbf{0} &\mathbf{a}_{1} &\mathbf{a}_{2} &\dots & \mathbf{a}_{n-1}& \mathbf{a}_{n}\\
        0 &b_{1} &b_{2} &\dots & b_{n-1}& b_{n}\\
        \textbf{0} &\textbf{0} &\textbf{0} & \dots &\textbf{0} &\mathbf{d}_{k} \\
        \textbf{0} & \textbf{0} &\textbf{0} &\dots &\textbf{0}&\mathbf{x}_{k}\\
        \textbf{0} & \textbf{0} &\textbf{0} &\dots &\textbf{0}&\mathbf{r}_{k}\\
                 \textbf{0}& \textbf{0} &\textbf{0}&\dots & \textbf{0}&\textbf{0}\\
         \mathbf{e}_{1} &\mathbf{e}_{2} &\mathbf{e}_{3}& \dots&\mathbf{e}_{n}&\mathbf{e}_{n+1}\\
    \end{bmatrix}. 
    \end{equation}
We construct 
$$
\mathbf{W}_{Q}=
\begin{bmatrix}
    \mathbf{0}&c\mathbf{E}_{n}&\mathbf{0}&\mathbf{0}\\
    \mathbf{0}&\mathbf{0}&\mathbf{0}&\mathbf{1}
\end{bmatrix}, \quad \mathbf{W}_{Q}\mathbf{P}=\begin{bmatrix}
    \mathbf{0}&c\mathbf{d}_{k}\\
    \mathbf{1} & 1
\end{bmatrix}, \quad \mathbf{W}_{K}=
\begin{bmatrix}
    \mathbf{E}_{n}&\mathbf{0}&\mathbf{0}&\mathbf{0}\\
    \mathbf{0}&\mathbf{0}&C&\mathbf{0}
\end{bmatrix},
$$
$$
\mathbf{W}_{K}\mathbf{P}=\begin{bmatrix}
    0&\mathbf{a}_{1} &\dots &\mathbf{a}_{n}\\
    C&0&0&0
\end{bmatrix}, \quad 
\mathbf{W}_{V}=\begin{bmatrix}
    \mathbf{0}&\mathbf{0}\\
    \mathbf{0}&e^{C}\mathbf{E}_{n}\\
    \mathbf{0}&\mathbf{0}
\end{bmatrix}, \quad 
\mathbf{W}_{V}\mathbf{P}=e^{C}\begin{bmatrix}
    \mathbf{0}&\mathbf{0}\\
    \mathbf{0}&e^{C}\mathbf{E}_{n}\\
    \mathbf{0}&\mathbf{0}
\end{bmatrix},
$$ 
for positive constants $C,\ c$. Also, we denote
$$
\mathbf{Z}=\mathbf{P}^{T}\mathbf{W}^{T}_{K}\mathbf{W}_{Q}\mathbf{P}=\begin{bmatrix}
C & C & \dots & C & C\\
    0&0&\dots&0 &c\mathbf{a}_{1}^{T}\mathbf{d}_{k}\\
    \vdots&\vdots&\ddots&\vdots&\vdots\\
    0&0&\dots &0&c\mathbf{a}_{n}^{T}\mathbf{d}_{k}
\end{bmatrix}.
$$
By using similar deduction in \cite{gao2024expressive,Giannou2023LoopedTA}, we know the following equation holds for $2\leq l\leq n+1$:
\begin{equation}
    (e^{C}\text{softmax}(\mathbf{Z}))_{l,n+1}\approx1+{Z}_{l,n+1},
\end{equation}
as long as $C$ is sufficiently large and $c$ is sufficiently small. After we introduce another head to cancel the constant \cite{gao2024expressive}, we know the output of the attention layer can be given as follows:
\begin{equation}
    \sum_{h=1}^{2}\mathbf{W}_{V}^{(h)}\mathbf{P} \cdot \text{softmax}(\mathbf{P}^{T}\mathbf{W}^{(h)T}_{K}\mathbf{W}_{Q}^{(h)}\mathbf{P})\approx c\begin{bmatrix}
    \mathbf{0}&\mathbf{0}\\
    \mathbf{0}&\mathbf{A}\mathbf{d}_{k}\\
    \mathbf{0}&\mathbf{0}
\end{bmatrix}.
\end{equation}
We denote $\text{Attn}(\mathbf{P})\approx\mathbf{U}$, where
\begin{equation}
    \mathbf{U}=\begin{bmatrix}
        \textbf{0}&\mathbf{a}_{1} &\mathbf{a}_{2} &\dots & \mathbf{a}_{n-1}& \mathbf{a}_{n}\\
        0&b_{1} &b_{2} &\dots & b_{n-1}& b_{n}\\
        \textbf{0}&\textbf{0} &\textbf{0} & \dots &\textbf{0} &\mathbf{d}_{k} \\
        \textbf{0}&\textbf{0} & \textbf{0} &\dots &\textbf{0}&\mathbf{x}_{k}\\
        \textbf{0}&\textbf{0} & \textbf{0} &\dots &\textbf{0}&\mathbf{r}_{k}\\
        \textbf{0}&\textbf{0}&\textbf{0}&\dots&\textbf{0}&c\mathbf{A}\mathbf{d}_{k}\\
         \mathbf{e}_{1} &\mathbf{e}_{2} &\mathbf{e}_{3}& \dots&\mathbf{e}_{n}&\mathbf{e}_{n+1}
    \end{bmatrix}.
\end{equation}
It is evident that all of $\mathbf{x}_{k+1} - \mathbf{x}_{k}$, $\mathbf{r}_{k+1} - \mathbf{r}_{k}$, and $\mathbf{d}_{k+1} - \mathbf{d}_{k}$ are results of continuous mappings from $\mathbf{x}_{k}$, $\mathbf{r}_{k}$, $\mathbf{d}_{k}$, and $\mathbf{A}\mathbf{d}_{k}$, respectively. Therefore, similar to the proof of Theorem 1, for all $\varepsilon > 0$, the following approximation holds according to the universal approximation theorem \cite[Theorem 2]{leshno1993multilayer}:
\begin{equation}
\left\| \text{FFN}(\textbf{U}) - \textbf{T} \right\| < \varepsilon,\     \mathbf{T}=\begin{bmatrix}
        \textbf{0}&\textbf{0} &\textbf{0} &\dots & \textbf{0}& \textbf{0}\\
        0&0 &0 &\dots & 0& 0\\
        \textbf{0}&\textbf{0} &\textbf{0} & \dots &\textbf{0} &\mathbf{d}_{k+1} - \mathbf{d}_{k} \\
        \textbf{0}&\textbf{0} & \textbf{0} &\dots &\textbf{0}&\mathbf{x}_{k+1} - \mathbf{x}_{k}\\
        \textbf{0}&\textbf{0} & \textbf{0} &\dots &\textbf{0}&\mathbf{r}_{k+1} - \mathbf{r}_{k}\\
        \textbf{0}&\textbf{0}&\textbf{0}&\dots&\textbf{0}&-c\mathbf{A}\mathbf{d}_{k}\\
         \textbf{0} &\textbf{0} &\textbf{0}& \dots&\textbf{0}&\textbf{0}
    \end{bmatrix}.
\end{equation}
Therefore, the output of the transformer layer can be represented as follows:
    \begin{equation}
            \text{TF}_{loop}(\mathbf{P}) \approx \mathbf{U} + \mathbf{T}  =\begin{bmatrix}
        \textbf{0} &\mathbf{a}_{1} &\mathbf{a}_{2} &\dots & \mathbf{a}_{n-1}& \mathbf{a}_{n}\\
        0 &b_{1} &b_{2} &\dots & b_{n-1}& b_{n}\\
        \textbf{0} &\textbf{0} &\textbf{0} & \dots &\textbf{0} &\mathbf{d}_{k+1} \\
        \textbf{0} & \textbf{0} &\textbf{0} &\dots &\textbf{0}&\mathbf{x}_{k+1}\\
        \textbf{0} & \textbf{0} &\textbf{0} &\dots &\textbf{0}&\mathbf{r}_{k+1}\\
                 \textbf{0}& \textbf{0} &\textbf{0}&\dots & \textbf{0}&\textbf{0}\\
        \mathbf{e}_{1} &\mathbf{e}_{2} &\mathbf{e}_{3}& \dots&\mathbf{e}_{n}&\mathbf{e}_{n+1}\\
    \end{bmatrix}. 
    \end{equation}
    This concludes that the specifically crafted transformer is capable of implementing a single iteration of the conjugate gradient method as stated in \cref{CGalg}.
\end{proof}

We now proceed to prove Theorem 3.1.

\begin{proof}[Proof for \cref{TheAlgformer4CG}]
     The initial input is given as follows:
    \begin{equation}
            \mathbf{P}=\begin{bmatrix}
        \textbf{0}&\mathbf{a}_{1} &\mathbf{a}_{2} &\dots & \mathbf{a}_{n-1}& \mathbf{a}_{n}\\
        0&b_{1} &b_{2} &\dots & b_{n-1}& b_{n}\\
        \textbf{0} &\textbf{0} & \textbf{0}&\dots &\textbf{0} &\textbf{0} \\
        \textbf{0} & \textbf{0} &\textbf{0}&\dots &\textbf{0}&\mathbf{x}_{0}\\
        \textbf{0} & \textbf{0} &\textbf{0}&\dots &\textbf{0}&\textbf{0}\\
        \mathbf{e}_{1} &\mathbf{e}_{2} &\mathbf{e}_{3}& \dots&\mathbf{e}_{n}&\mathbf{e}_{n+1}\\
    \end{bmatrix}. 
    \end{equation}
 We construct 
$$\mathbf{W}_{Q}=\begin{bmatrix}
    \mathbf{0}&\mathbf{0}&&{1}\\
    \mathbf{0}&\mathbf{1}&&{1}
\end{bmatrix}, \quad \mathbf{W}_{Q}\mathbf{P}=\begin{bmatrix}
    0 &0 & \dots &0&1\\
    1 & 1&1&1&1
\end{bmatrix}, \quad 
\mathbf{W}_{K}=\begin{bmatrix}
    \mathbf{0}&c&\mathbf{0}&{0}&\mathbf{0}\\
    \mathbf{0}&0&\mathbf{0}&{C}&\mathbf{0}
\end{bmatrix},
$$
$$
\mathbf{W}_{K}\mathbf{P}=\begin{bmatrix}
    0&cb_{1} &cb_{2} &\dots & cb_{n}\\
    C&0&0&0&0
\end{bmatrix}, \quad
\mathbf{W}_{V}=\begin{bmatrix}
    \mathbf{0}&\mathbf{0}\\
    \mathbf{0}&e^{C}\mathbf{E}_{n}\\
    \mathbf{0}&\mathbf{0}
\end{bmatrix}, \quad 
\mathbf{W}_{V}\mathbf{P}=e^{C}\begin{bmatrix}
    \mathbf{0}&\mathbf{0}\\
    \mathbf{0}&\mathbf{E}_{n}\\
    \mathbf{0}&\mathbf{0}
\end{bmatrix},
$$ 
for positive constants $C,\ c$. Also, denote 
$$
\mathbf{Z}=\mathbf{P}^{T}\mathbf{W}^{T}_{K}\mathbf{W}_{Q}\mathbf{P}=\begin{bmatrix}
    C & C & \dots & C & C\\
    0&0&\dots &0&cb_{1}\\
    \vdots&\vdots&\ddots&\vdots&\vdots\\
    0&0&\dots &0&cb_{n}\\
\end{bmatrix}.
$$
We know the following equation holds for $2\leq l\leq n+1$:
\begin{equation}
    (e^{C}\text{softmax}(\mathbf{Z}))_{l,n+1}\approx1+{Z}_{l,n+1},
\end{equation}
as long as $C$ is sufficiently large and $c$ is sufficiently small. After we introduce another head to cancel the constant \cite{gao2024expressive}, we know the output of the attention layer can be:
\begin{equation}
\label{HeadMakeB}
    \sum_{h=1}^{2}\mathbf{W}_{V}^{(h)}\mathbf{P} \cdot \text{softmax}(\mathbf{P}^{T}\mathbf{W}^{(h),T}_{K}\mathbf{W}_{Q}^{(h)}\mathbf{P})\approx c\begin{bmatrix}
    \mathbf{0}&\mathbf{0}\\
    \mathbf{0}&\mathbf{b}\\
    \mathbf{0}&\mathbf{0}
\end{bmatrix}.
\end{equation}
Subsequently, following the procedure outlined in \cref{lemma1StepCG}, we employ two additional attention heads to obtain the following result:
\begin{equation}
\label{HeadMakeAx}
    \sum_{h=3}^{4}\mathbf{W}_{V}^{(h)}\mathbf{P} \cdot \text{softmax}(\mathbf{P}^{T}\mathbf{W}^{(h),T}_{K}\mathbf{W}_{Q}^{(h)}\mathbf{P})\approx c\begin{bmatrix}
    \mathbf{0}&\mathbf{0}\\
    \mathbf{0}&\mathbf{A}\mathbf{x}_{0}\\
    \mathbf{0}&\mathbf{0}
\end{bmatrix}.
\end{equation}
Referring to the identity: $\mathbf{d}_{0}=\mathbf{r}_{0}=\mathbf{b}-\mathbf{A}\mathbf{x}_{0}$, we confirm that the required quantities have been obtained through attention heads, thus yielding the following results after passing through the FFN:
    \begin{equation}
            \begin{bmatrix}
       \textbf{0}_{n,1}& \mathbf{a}_{1} &\mathbf{a}_{2} &\dots & \mathbf{a}_{n-1}& \mathbf{a}_{n}\\
        0&b_{1} &b_{2} &\dots & b_{n-1}& b_{n}\\
        \textbf{0}&\textbf{0} &\textbf{0} & \dots &\textbf{0} &\mathbf{d}_{0} \\
        \textbf{0}&\textbf{0} & \textbf{0} &\dots &\textbf{0}&\mathbf{x}_{0}\\
        \textbf{0}&\textbf{0} & \textbf{0} &\dots &\textbf{0}&\mathbf{r}_{0}\\
         \textbf{0}& \textbf{0} &\textbf{0}&\dots & \textbf{0}&\textbf{0}\\
        \mathbf{e}_{1} &\mathbf{e}_{2} &\mathbf{e}_{3}& \dots&\mathbf{e}_{n}&\mathbf{e}_{n+1}\\
    \end{bmatrix}, 
    \end{equation}
    By applying \cref{lemma1StepCG} iteratively, after $n$ applications of the looped transformer, the resulting matrix adheres to the following pattern:
        \begin{equation}
           \begin{bmatrix}
        \textbf{0}&\mathbf{a}_{1} &\mathbf{a}_{2} &\dots & \mathbf{a}_{n-1}& \mathbf{a}_{n}\\
       0& b_{1} &b_{2} &\dots & b_{n-1}& b_{n}\\
        \textbf{0}&\textbf{0} &\textbf{0} & \dots &\textbf{0} &\mathbf{d}_{n} \\
        \textbf{0}&\textbf{0} & \textbf{0} &\dots &\textbf{0}&\mathbf{x}_{n}\\
        \textbf{0}&\textbf{0} & \textbf{0} &\dots &\textbf{0}&\mathbf{r}_{n}\\
                 \textbf{0}& \textbf{0} &\textbf{0}&\dots & \textbf{0}&\textbf{0}\\
        \mathbf{e}_{1} &\mathbf{e}_{2} &\mathbf{e}_{3}& \dots&\mathbf{e}_{n}&\mathbf{e}_{n+1}\\
    \end{bmatrix}, 
    \end{equation}
    which completes the proof.
\end{proof}

We note that NLAFormer effectively captures iterative procedures, confirming its utility not only for isolated algebraic operations but also for structured iterative algorithms. 
Note that a sequence of basic operators required by the algorithm is effectively assembled to complete the construction. Similarly, for other iterative methods, the relevant basic operators can be composed to support their representation. Therefore, our detailed demonstration of the conjugate gradient method serves as a representative example that clearly illustrates how the logic of iterative algorithms can be systematically expressed within NLAFormer. This methodological clarity enables related algorithms to be constructed in a step-by-step manner. It facilitates the structured and intuitive integration of future algorithmic extensions and reinforces NLAFormer as a general and expressive computational paradigm.

\section{Experimental Results}
\label{ExperimentP1}

In this section, we present 
empirical evaluations of the NLAFormer.
In the first part, we evaluate whether NLAFormer can effectively learn iterative solutions of the CG method.
In the second part, we investigate whether NLAFormer can autonomously discover iterative procedures that 
achieve faster convergenc than the CG method.

\begin{figure}[htbp]
    \centering
    \includegraphics[width=1\linewidth]{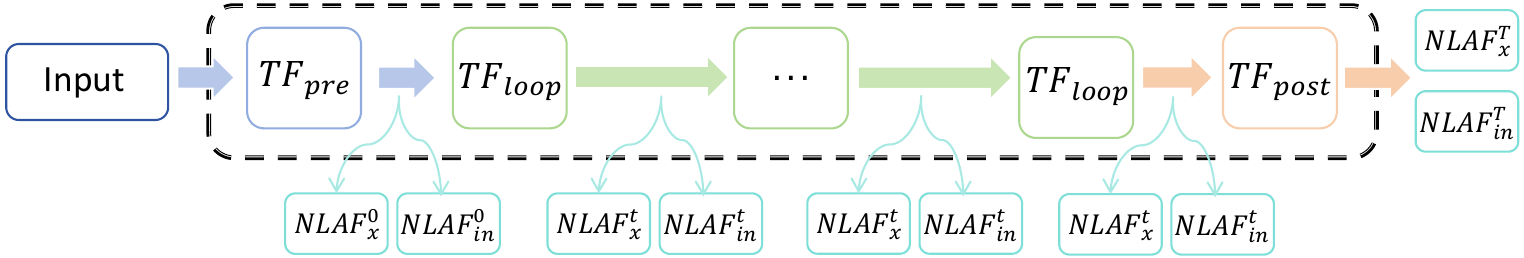}
    \caption{Diagram of the probe mechanism.}
    \label{Diagram of the probe mechanism}
\end{figure}

\subsection{The First Setting}

We introduce two probes to extract variable information from NLAFormer's iterative output at each step, as depicted in \cref{Diagram of the probe mechanism}. 
The first probe captures the iterative solution ($\text{NLAF}_x^t \in \mathbb{R}^n$), corresponding to CG’s approximation of the solution at each step, while the second probe extracts intermediate variables ($\text{NLAF}_{\text{in}}^t \in \mathbb{R}^{2n}$), corresponding to CG’s residual ($\mathbf{r}_t$) and direction ($\mathbf{d}_t$) vectors.
Moreover, we implement two training approaches to investigate the extent to which the model internalizes CG's iterative logic.
The first one ({\bf Result supervision}) is that 
only iterative solutions ($\mathrm{NLAF}_x^t$) are supervised with CG results whose loss is given as follows:
$$
   \min_{\mathbf{\Theta}}  \mathbb{E}_ {\mathbf{P}}  \left [ 
  \frac {1}{n(T+1)}  \sum_ {t=0}^ {T}  \left\|\text{NLAF}_ {\text{x}}^ {t}  (\mathbf{P};\Theta)-\mathbf{x}_{t}\right\|^{2}_{2}
  \right ].
  $$
  The second one ({\bf Joint supervision}) is that 
 both iterative solutions ($\mathrm{NLAF}_x^t$) and 
 intermediate variables ($\mathrm{NLAF}_{in}^t$) are explicitly supervised using the full loss function defined as follows:
\begin{equation}
    \label{train_loss_result}
      \min_{\mathbf{\Theta}}  \mathbb{E}_ {\mathbf{P}}  \left [  
  \frac {1}{n(T+1)}  \left (\sum _ {t=0}^ {T}  \|\text{NLAF}_ {\text{x}}^ {t}  (\mathbf{P};\Theta)-\mathbf{x}_{t}\|^{2}_{2}+\eta\|\text{NLAF}_ {\text{in}}^ {t}  (\mathbf{P};\Theta)-(\mathbf{r}_{t},\mathbf{d}_{t})\|^{2}_{2} \right ) \right ], 
\end{equation}
where $(\mathbf{r}_{t},\mathbf{d}_{t})$ are the intermediate variable of CG at the $t$th iterative in \cref{CGalg}, $\mathbf{P}$ is the input, $\Theta$ is the transformer parameter, and $\eta$ is a regularization parameter.

In our experiment, we utilize a linear system with dimension $n=20$, which is the same as the setting in the paper \cite{gao2024expressive}. This moderate scale facilitates a precise evaluation of the iterative learning capability of the model. 
The embedding size is $d=256$. The matrix $\mathbf{A}$ is constructed by adding a symmetric positive definite matrix, generated using standard scikit-learn libraries, to a diagonal matrix whose entries are independently drawn from a log normal distribution with zero mean and variance $\sigma = 1.2$. This construction ensures positive definiteness while easing the ill-conditioning, making the test instances more representative of realistic numerical settings. 
The vector $\mathbf{x}$ is sampled from the standard normal distribution, i.e., $\mathbf{x} \sim \mathcal{N}(\mathbf{0}, \mathbf{I})$. The model training employs an Adam optimizer \cite{kingma2014adam} with a dynamically adjusted learning rate schedule designed to achieve stable convergence over 700k epochs, as shown in \cref{fig:lrtrend}. 
The batch size of 64 was employed in the training procedure.

In \cref{fig:train loss}, we present a comparative analysis of the intermediate discrepancy between the beginning of training and after 100k training steps. The discrepancy calculates the difference between intermediate variables generated by the model ($\text{NLAF}_ {\text{in}}^t$) and those of the CG method ($\mathbf{r}_t,\mathbf{d}_t$), defined as follows: $\frac{1}{n}\sum_{t=2}^{n}\frac{1}{n-1}\|\text{NLAF}_{in}^{t}-(\mathbf{r}_{t},\mathbf{d}_{t})\|_{2}^{2}$.
We observe that, over training, intermediate variables spontaneously become increasingly aligned with the CG under result supervision, despite no explicit intermediate guidance. This behavior emerges without any external pressure to match the internal structure of CG, indicating that the NLAFormer is not arbitrarily generating intermediates, but rather finds a computational path that naturally aligns with the CG recursive structure. This convergence suggests that NLAFormer implicitly identifies a CG-like recursive structure from the provided data.

Next we conducted tests with the hyperparameter $\eta$ set to values in the range \{0.05, 0.01, 0.005, 0.001, 0.0005, 0.0001\} as shown in \cref{fig:diffeta}. It shows that effective training requires result supervision to dominate, but intermediate supervision should not be too weak. We ultimately selected 0.0005, which yielded the best performance, for use in the following 
experimental results. 
\begin{table}[htb]
\centering
\begin{tabular}{|c|c|c|c|}
\hline
                                                    Method                   & Initial epoch & 100k epoch & Percentage decrease \\ \hline
Joint supervision  & 46.8370      & 0.1920           & 99.7\%              \\
Result supervision & 45.9350      & 0.5058          & 98.0\%                \\ \hline
\end{tabular}
\caption{The discrepancy between the model-generated and the CG intermediate variables before and after training}
\label{fig:train loss}
\end{table}
\begin{figure}[H]
	\centering
 \subfigure[The trend of learning rate.]{
	\begin{minipage}{0.45\linewidth}
		\centering
		\includegraphics[width=\linewidth]{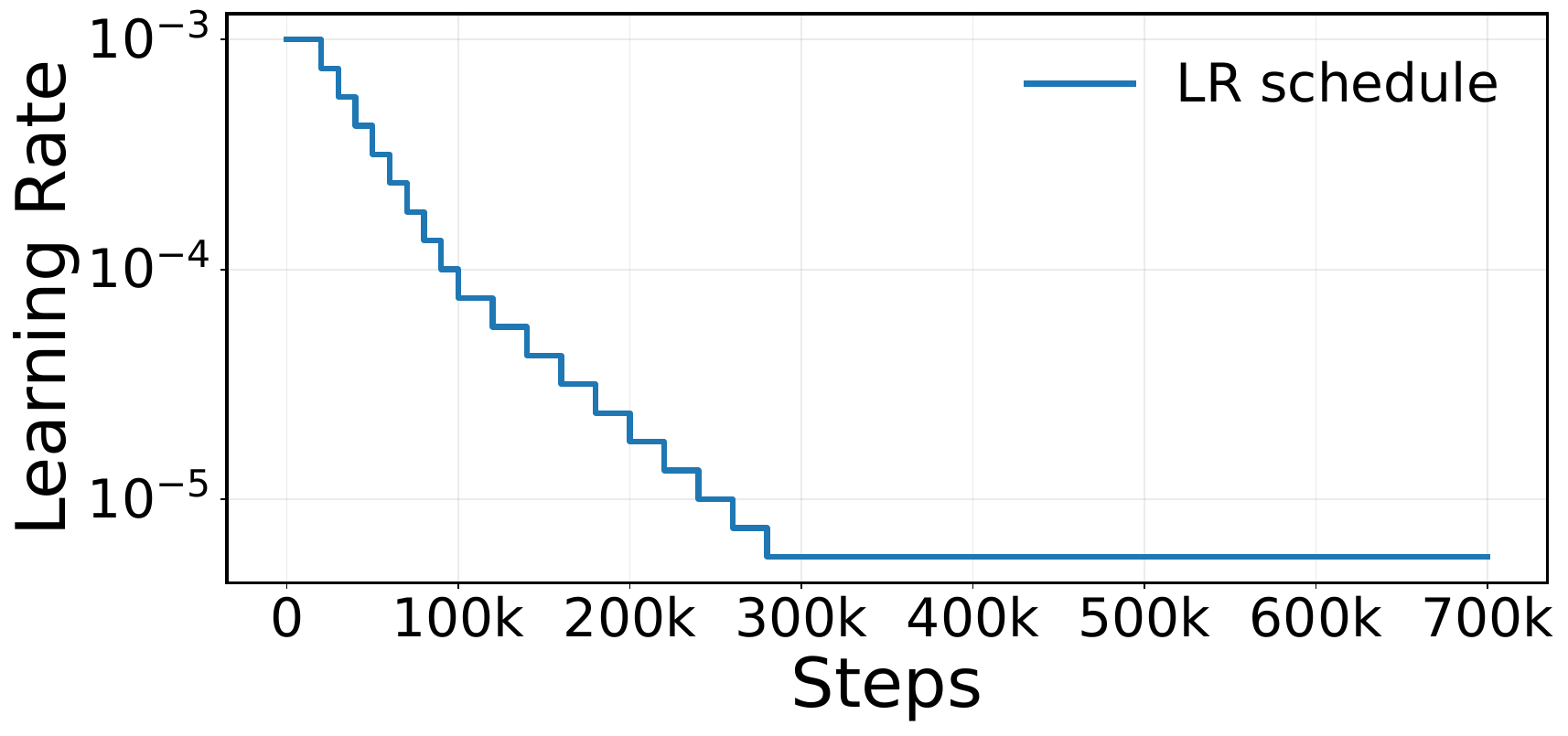}
\label{fig:lrtrend}
	\end{minipage}}
 \subfigure[The relative error with different $\eta$.]{
	\begin{minipage}{0.45\linewidth}
		\centering
		\includegraphics[width=\linewidth]{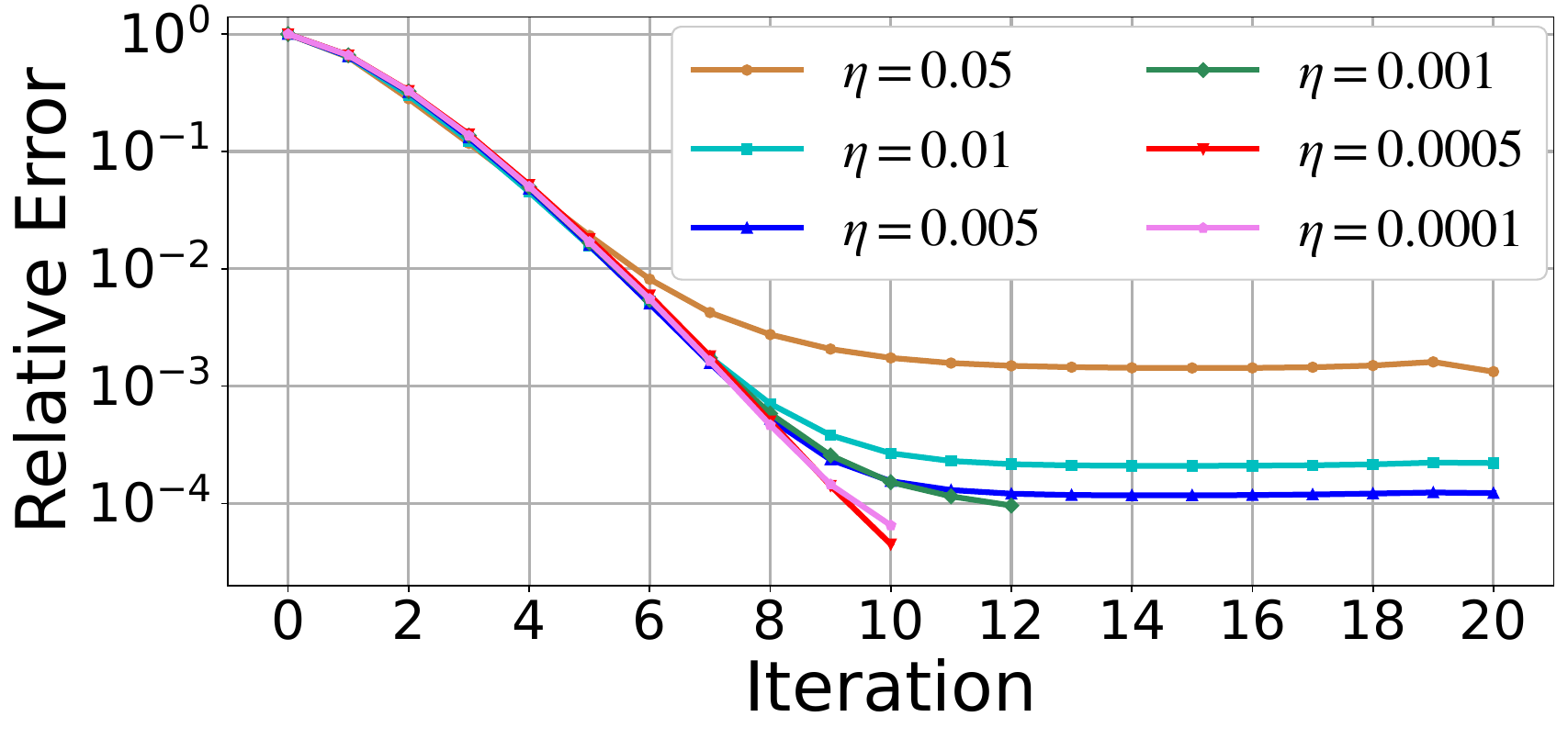}
    \label{fig:diffeta}
	\end{minipage}}
   \caption{The experimental setting.}
  \label{}
\end{figure}

\subsubsection{Experimental Results} 

After NLAFormer is trained, we demonstrate the testing performance of 
NLAFormer in \cref{fig:RE4exp1}.
In the figure, we display the average relative errors 
($\frac{\| \text{NLAF}_ {\text{x}}^ {t} -x_{true}\|^2}{\| x_{true}\|^2}$) 
over 512 tested linear systems against iterations. Here 
$x_{true}$ refers to the solution of a linear system.
According to the results, we see that 
NLAFormer can effectively mimic the iterative process of CG
across both types of supervision. 
Moreover, a quantitative comparison of intermediate variable errors, defined as 
$\frac{1}{n}\|\text{NLAF}_{in}^{t}-(\mathbf{r}_{t},\mathbf{d}_{t})\|_{2}^{2}$, is shown in \cref{fig:intermediate error}. The results demonstrate that joint supervision significantly enhances alignment with the true intermediate values of CG. The marked improvement under explicit intermediate guidance conclusively indicates that NLAFormer is not merely fitting input-output mappings but effectively internalizing the iterative logic of the CG method.
\begin{figure}[H]
	\centering
 \subfigure[The relative error curves of the iterative results with different training strategies.]{
	\begin{minipage}{0.45\linewidth}
		\centering
		\includegraphics[width=\linewidth]{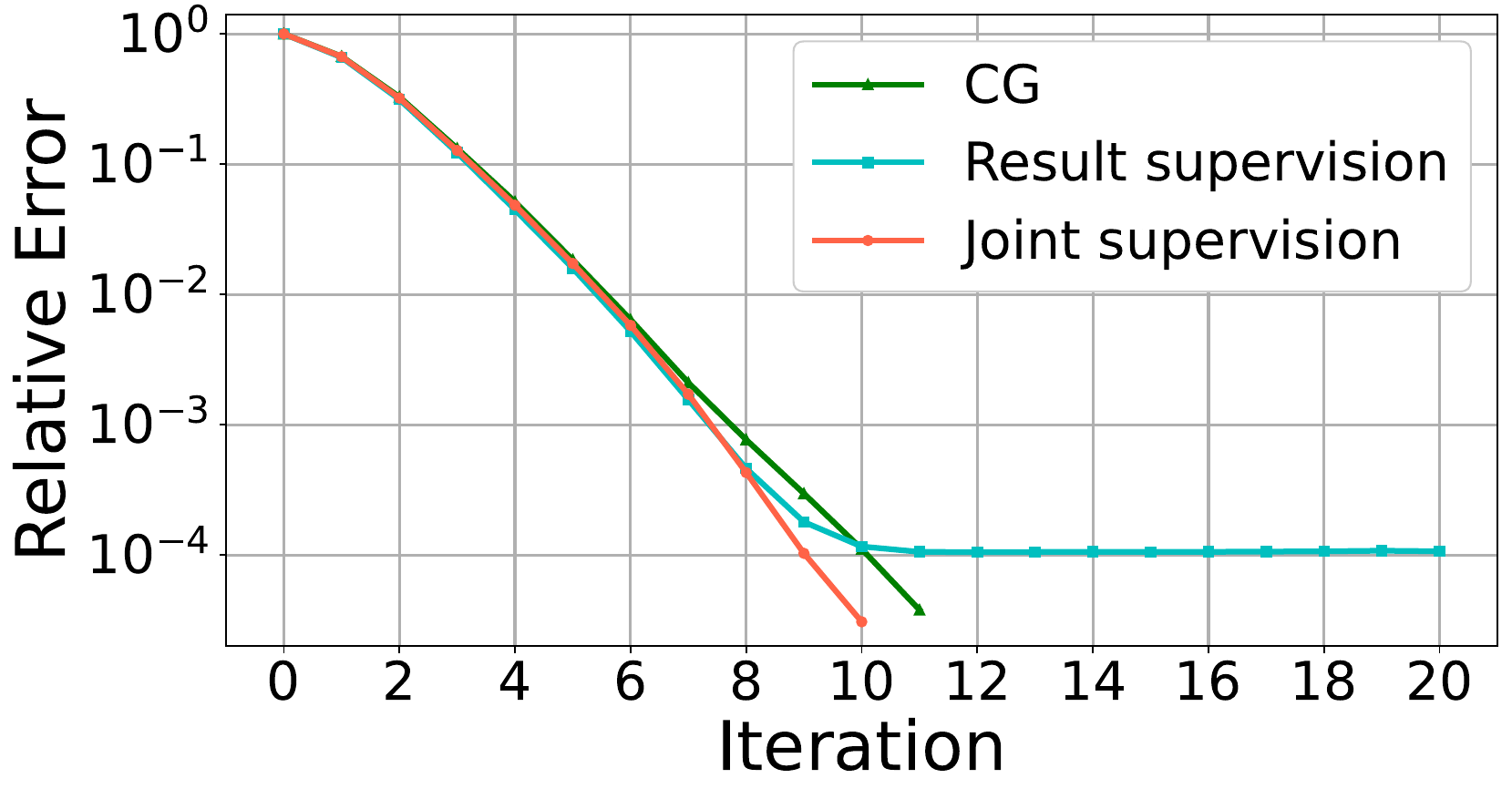}
\label{fig:RE4exp1}
	\end{minipage}}
 \subfigure[The curves of intermediate variable errors with different training strategies.]{
	\begin{minipage}{0.45\linewidth}
		\centering
		\includegraphics[width=\linewidth]{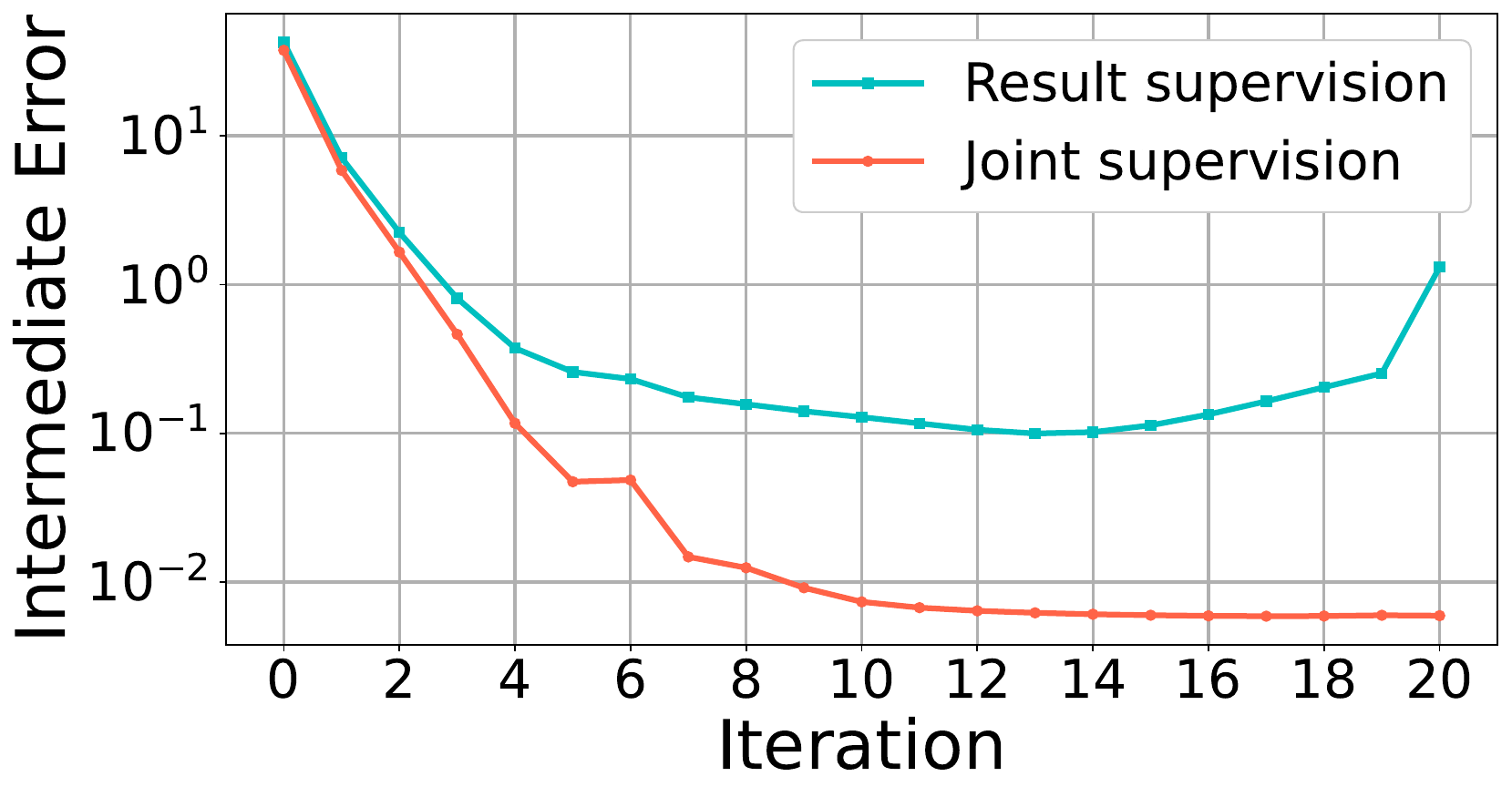}
    \label{fig:intermediate error}
	\end{minipage}}
   \caption{The experimental results.}
\end{figure}


\subsection{The Second Setting}

Building upon our theoretical analysis, if a transformer is equipped with the ability to incorporate numerical algebraic operators as learnable components, it should be capable not only of replicating operators but also of learning to combine them in more effective ways to solve the equation. This suggests that, beyond simply imitating existing algorithms such as CG, NLAFormer has the potential to autonomously learn iterative procedures that lead to faster convergence compared to established solvers.
To support this exploration, we introduce losses composed of two parts. The first part, \textbf{Step supervision}, which guides the model to follow reference CG iteration steps, helping it internalize the logic of the CG:
$$
   \min_{\mathbf{\Theta}}  \mathbb{E}_ {\mathbf{P}}   
  \left [ \frac {1}{(T-T_ {0}+1)n}  \left (\sum _ {t=T_{0}}^ {T}  \|\text{NLAF}_ {\text{x}}^ {t}  (\mathbf{P};\Theta)-\mathbf{x}_{{t}}\|^{2}_{2} \right ) \right ],
  $$
  and the second part, 
    \textbf{Solution supervision}, which focuses on minimizing the distance to the solution of a linear system, encouraging the model to improve iteration efficiency. Their combination forms the final loss:
\begin{equation}
    \label{train_loss}
      \min_{\mathbf{\Theta}}  \mathbb{E}_ {\mathbf{P}} \left [   
  \frac {1}{(T-T_ {0}+1)n}  \left (\sum _ {t=T_{0}}^ {T}  \|\text{NLAF}_ {\text{x}}^ {t}  (\mathbf{P};\Theta)-\mathbf{x}_{{t}}\|^{2}_{2}+\lambda\|\text{NLAF}_ {\text{x}}^ {t}  (\mathbf{P};\Theta)-\mathbf{x}\|^{2}_{2}) \right ) \right ], 
\end{equation}
where $T_{0}=\max\{T-K+1,0\}$, with $K$ denoting the length of the learning window. It determines how many recent iterations are used during training and helps balance between learning efficiency and the model’s ability to capture long-term dynamics. $\text{NLAF}^{t}_{x}$ is the $t$-th iteration solution of the NLAFormer, as defined in \cref{TFstructure} and depicted in \cref{Diagram of the probe mechanism}, where $t=0,\dots, T$. The vector $\mathbf{x}$ is the underlying solution to the given 
linear system, and $\lambda$ is a regularization parameter. 

We conducted tests with the hyperparameter $\lambda$ set to values in the range \{13, 10, 7, 4, 1, 0.5\} as shown in \cref{fig:cgdifflmd}. It shows that increasing the weight of solution supervision can accelerate convergence, but an excessively large weight may degrade performance. We ultimately selected 10, which yielded the best performance. This value was used in the following experimental results.
\begin{figure}[htbp]
    \centering
    \includegraphics[width=0.5\textwidth]{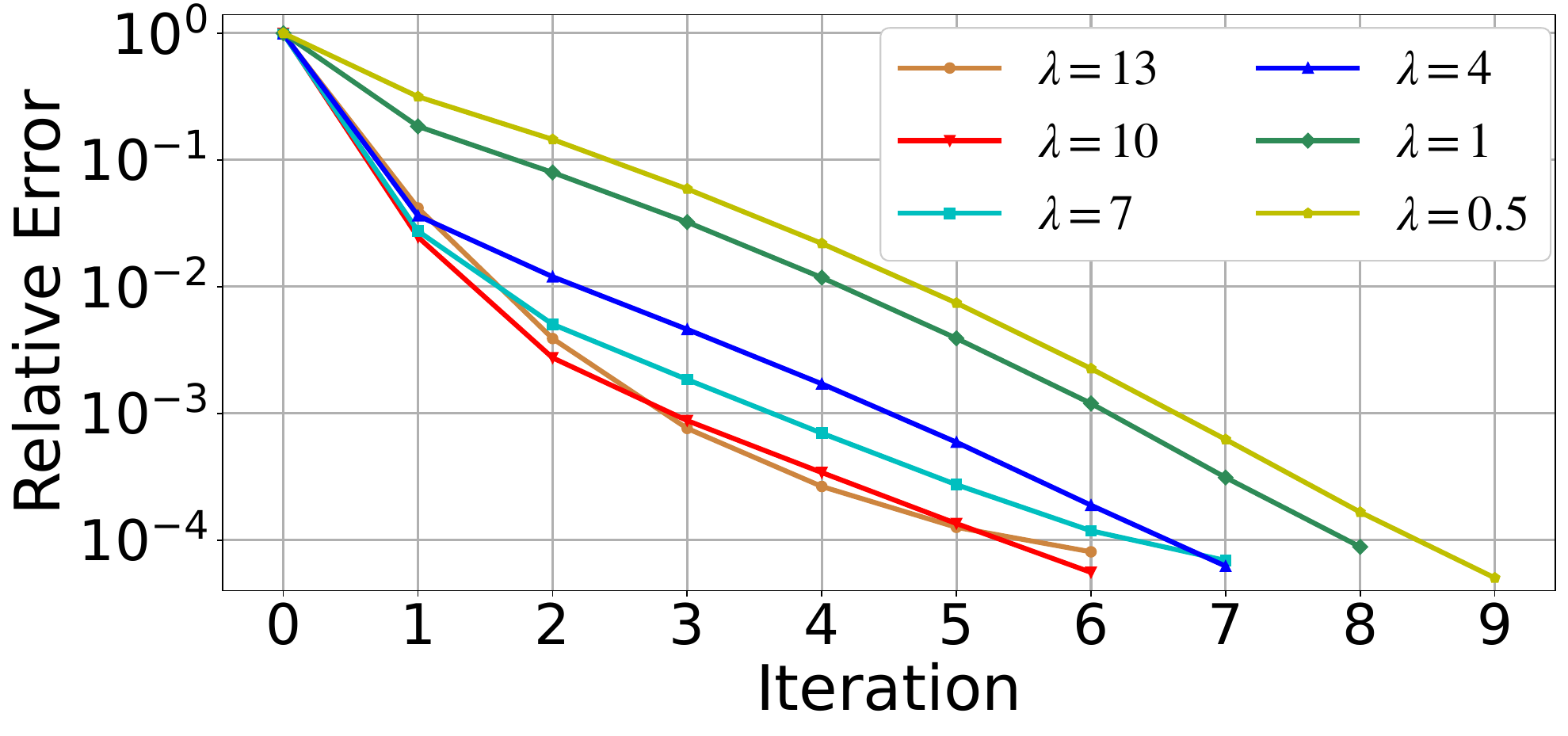} 
    \caption{The relative error curves of the iterative results with different $\lambda$}
    \label{fig:cgdifflmd}
\end{figure}

\subsubsection{Experimental Results}

We first verify whether the model can internalize CG’s recursive behavior under the framework. \cref{duallossDiscrepancy} presents a comparative analysis of the intermediate discrepancy between the beginning of the training and after 100k training epochs. It can be observed that even without explicit supervision of the intermediate variables of $\mathbf{r}_{t}$, $\mathbf{d}_{t}$, the intermediates generated by the model still tend to approximate the intermediates of the CG method. This suggests that the model is capable of capturing the structured update logic of CG. 

    \begin{table}[htbp]
    \centering
\begin{tabular}{|c|c|c|}
\hline
    Initial step & 100k epoch & Percentage decrease \\ \hline
     46.1199     & 0.4355          & 99\%                \\ \hline
\end{tabular}
\caption{The discrepancy between the model-generated and the CG intermediate variables before and after training}
\label{duallossDiscrepancy}
\end{table}

In \cref{CG Comparison results of UniFormer}, we show 
the average relative error (over 512 testing examples) 
for each iteration: $\frac{\| \text{NLAF}_ {\text{x}}^{t}-x_{true}\|^2}{\| x_{true}\|^2}$ for both NLAFormer and CG.
To ensure positive definiteness, the input matrices were generated using standard numerical libraries and by adding diagonal matrices whose entries were drawn from log-normal distributions with variances $\sigma = 1$, $1.2$, and $1.4$. This variation in condition numbers facilitates a more comprehensive assessment of the model performance in a variety of numerical scenarios.
As shown in \cref{CG Comparison results of UniFormer}, NLAFormer shows a consistent reduction in the number of iterations required to reach the accuracy threshold compared to CG in multiple cases, even reducing the number of iterations by up to nearly 50\%. 
Crucially, the improvement does not arise from providing explicit additional supervision regarding how to combine operators beyond CG, indicating that the model inherently grasps variations in update dynamics that lead to quicker convergence. This offers empirical backing for the theory that a more effective solver can be crafted through data-driven training. 

\begin{figure}
	\centering
 \subfigure[$\sigma=1$]{
	\begin{minipage}{0.7\linewidth}
		\centering
		\includegraphics[width=\linewidth]{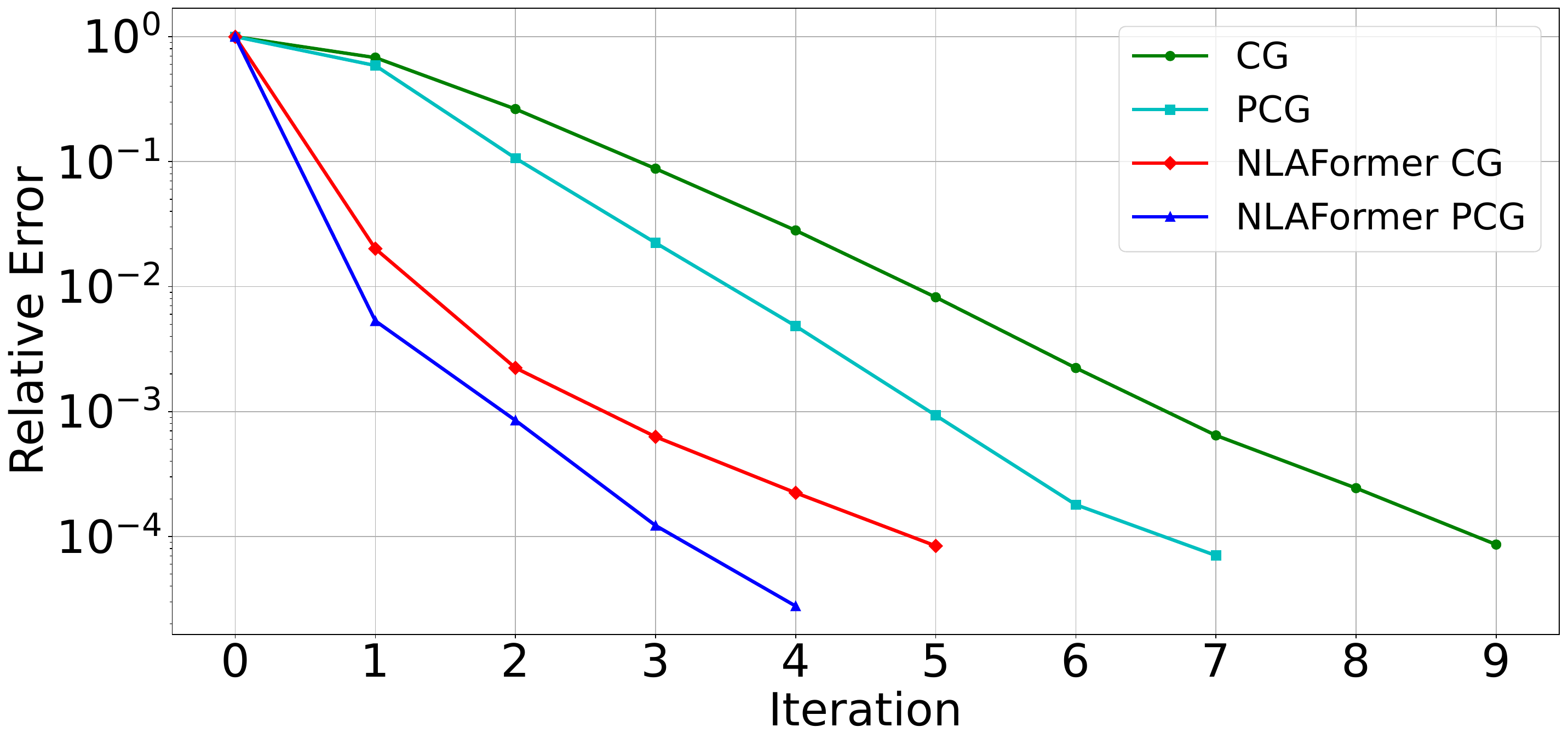}
	\end{minipage}}
 \subfigure[$\sigma=1.2$]{
	\begin{minipage}{0.7\linewidth}
		\centering
		\includegraphics[width=\linewidth]{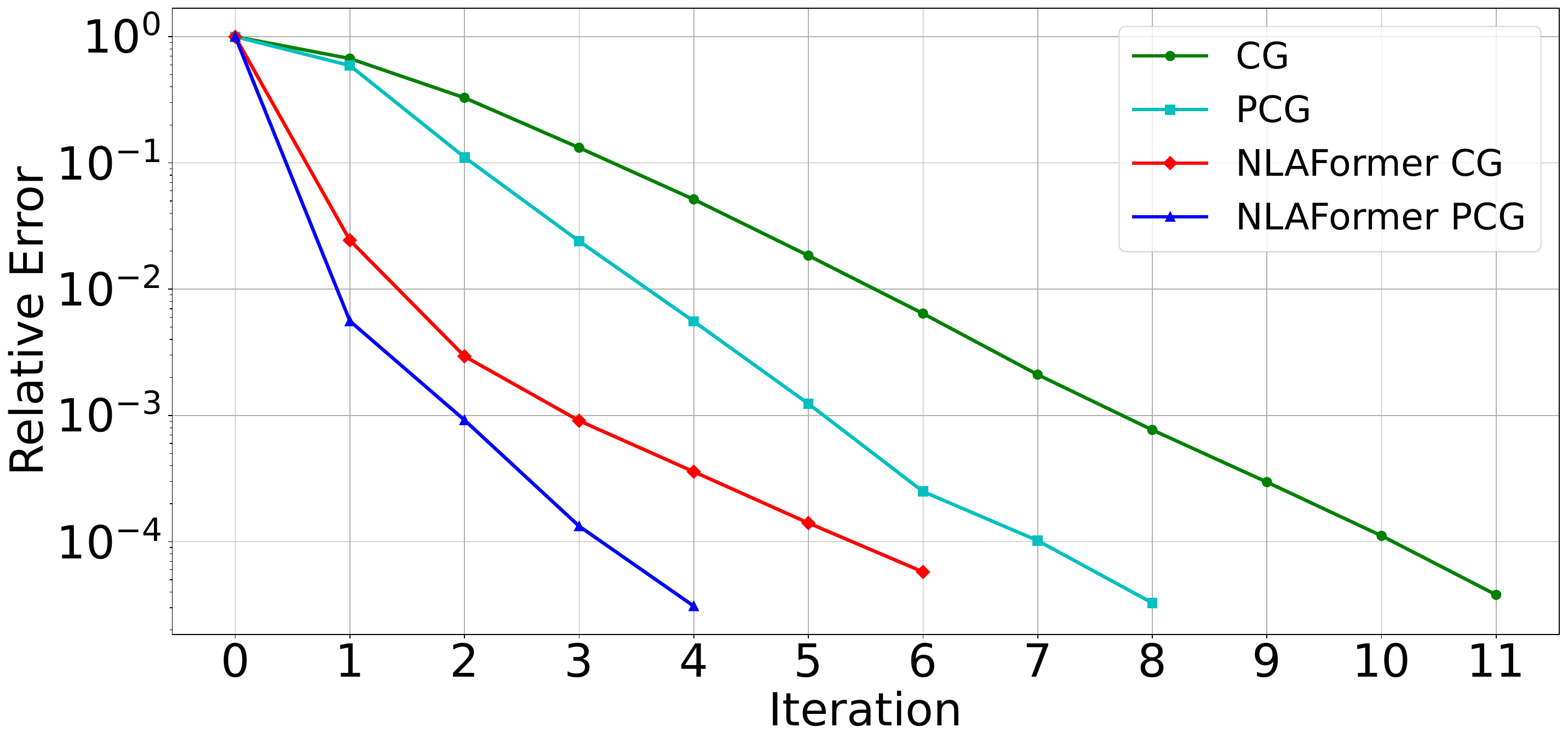}
	\end{minipage}}
   \subfigure[$\sigma=1.4$]{
	\begin{minipage}{0.7\linewidth}
		\centering
		\includegraphics[width=\linewidth]{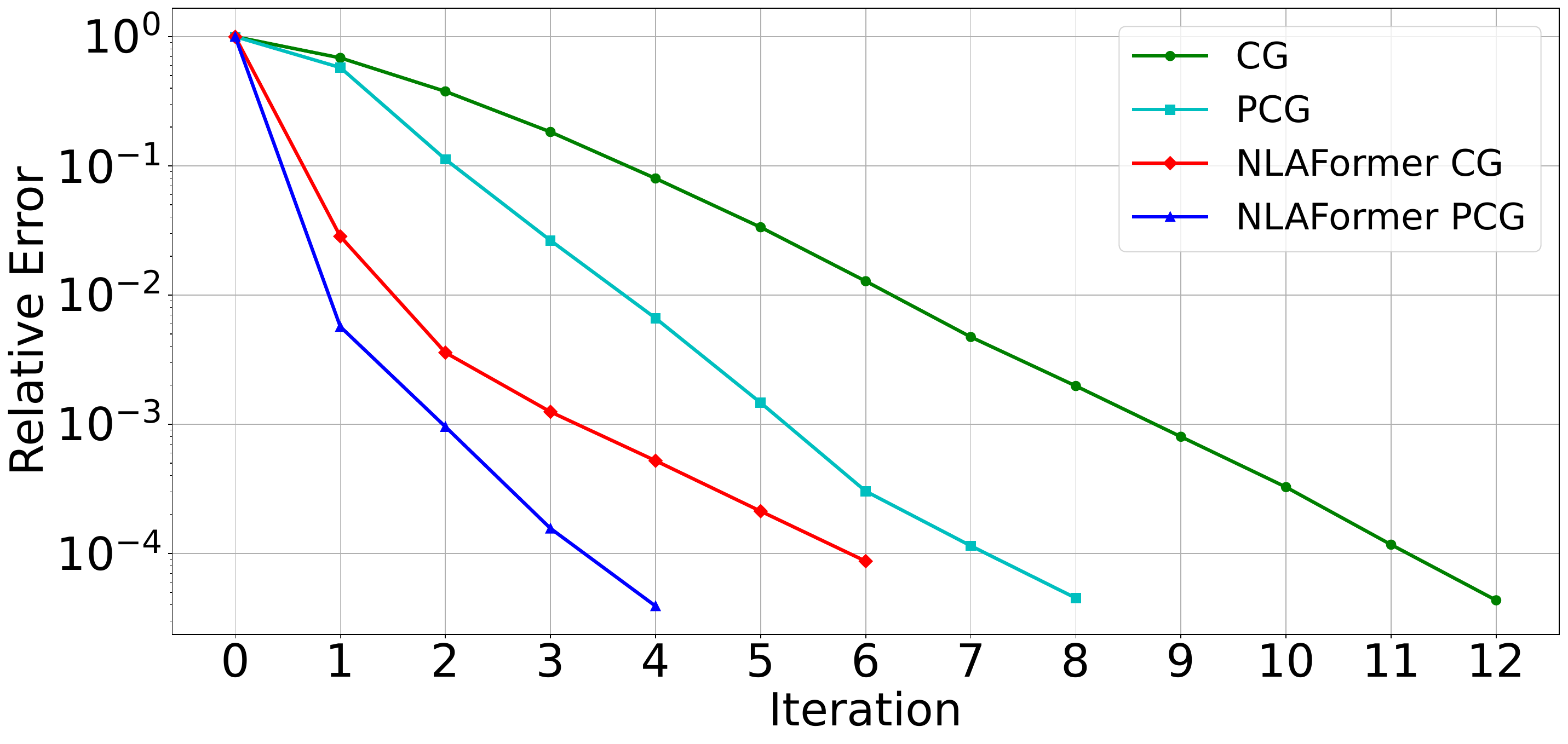}
	\end{minipage}}

   \caption{Comparison results of NLAFormer and traditional algorithms on different datasets.}
  \label{CG Comparison results of UniFormer}
\end{figure}

Further, we examine whether NLAFormer can generalize its optimization capability when exposed to a stronger and more sophisticated iterative method, the Jacobi preconditioned conjugate gradient (PCG) algorithm. In this setting, we replace the stepwise supervision signal from CG to PCG without modifying the model structure or providing explicit guidance on how to optimize PCG itself.
The findings presented in \cref{CG Comparison results of UniFormer} demonstrate that NLAFormer, when trained using PCG, achieves a notably faster convergence. In contrast, the classical PCG algorithm requires more than 50\% additional iterations to achieve an equivalent error level. Furthermore, the number of iterations is even reduced by approximately 30\% compared to NLAFormer trained with CG. Its ability to generalize beyond specific baselines and adapt to varying algorithmic patterns highlights its potential as a flexible and learnable framework that not only replicates but also refines classical solvers, providing a foundation for exploring new ways in which data-driven training and classical methods can inform and shape one another.

\subsubsection{Parameter Analysis}

We assess how NLAFormer's convergence behavior is influenced by key training parameters.
\begin{itemize}
\item{Embedding dimension.} As shown in the first row of \cref{CGdiffDIm,PCGdiffDIm}, larger embedding dimensions generally yield better convergence performance, indicating that increased representational capacity enhances the model’s ability to capture iterative dynamics.
\item{Learning window size.} The second row of \cref{CGdiffDIm,PCGdiffDIm} shows that while performance slightly degrades when reducing the learning window (e.g., from 20 to 15), the model maintains competitive convergence behavior. This suggests that NLAFormer does not overfit to a specific window size and retains robustness under reduced supervision length.
\item{Distribution robustness.} Across various matrix distributions of the system (see \cref{CG Comparison results of UniFormer}), NLAFormer consistently maintains a strong performance. This adaptability contrasts with CG, whose iteration count varies significantly depending on matrix conditioning, highlighting the potential of data-driven models to learn representations that generalize across problem variations.
\end{itemize}

\begin{figure}[htb]
	\centering
 \subfigure[$\sigma=1$]{
	\begin{minipage}{0.31\linewidth}
		\centering
		\includegraphics[width=\linewidth]{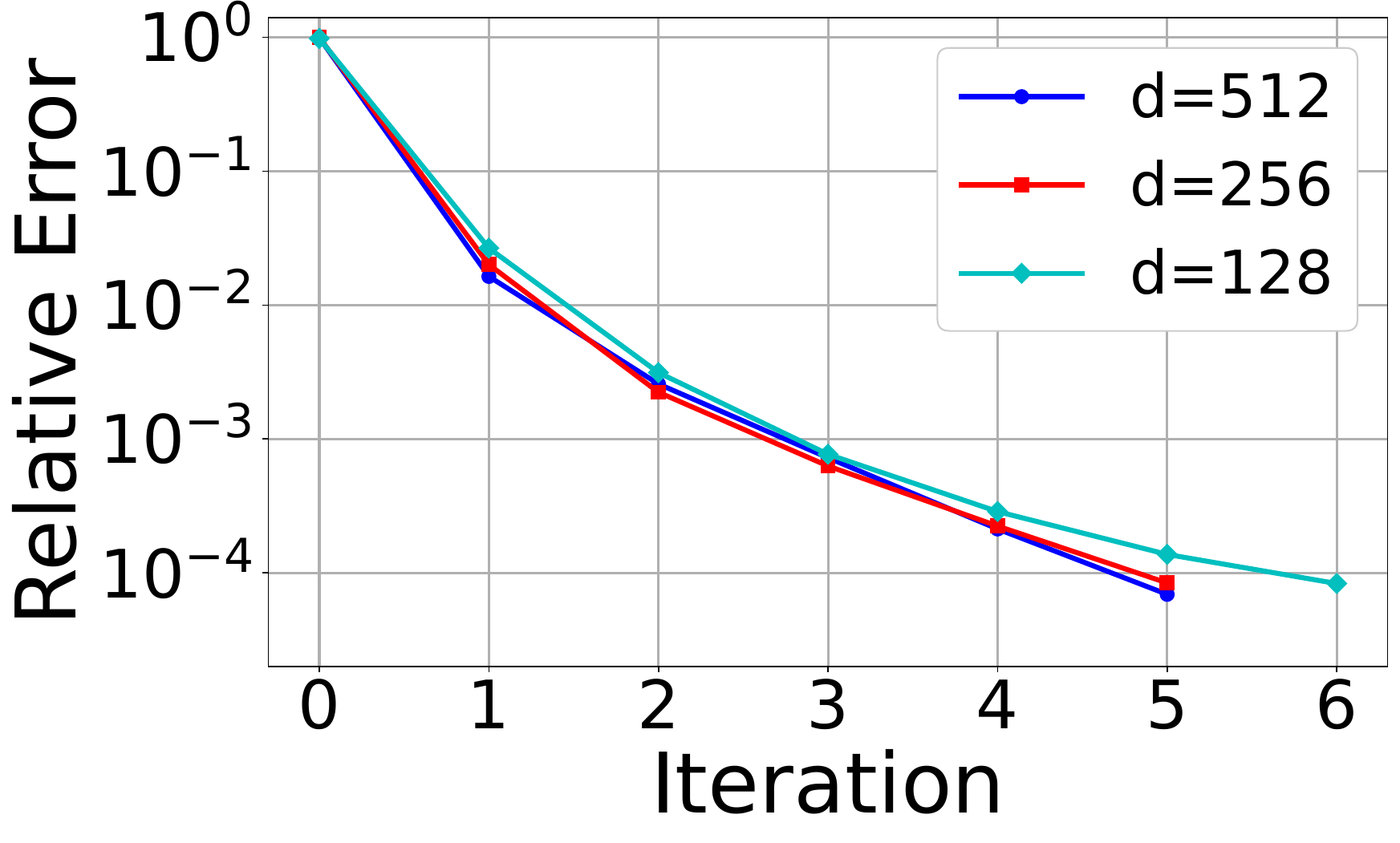}
	\end{minipage}}
 \subfigure[$\sigma=1.2$]{
	\begin{minipage}{0.31\linewidth}
		\centering
		\includegraphics[width=\linewidth]{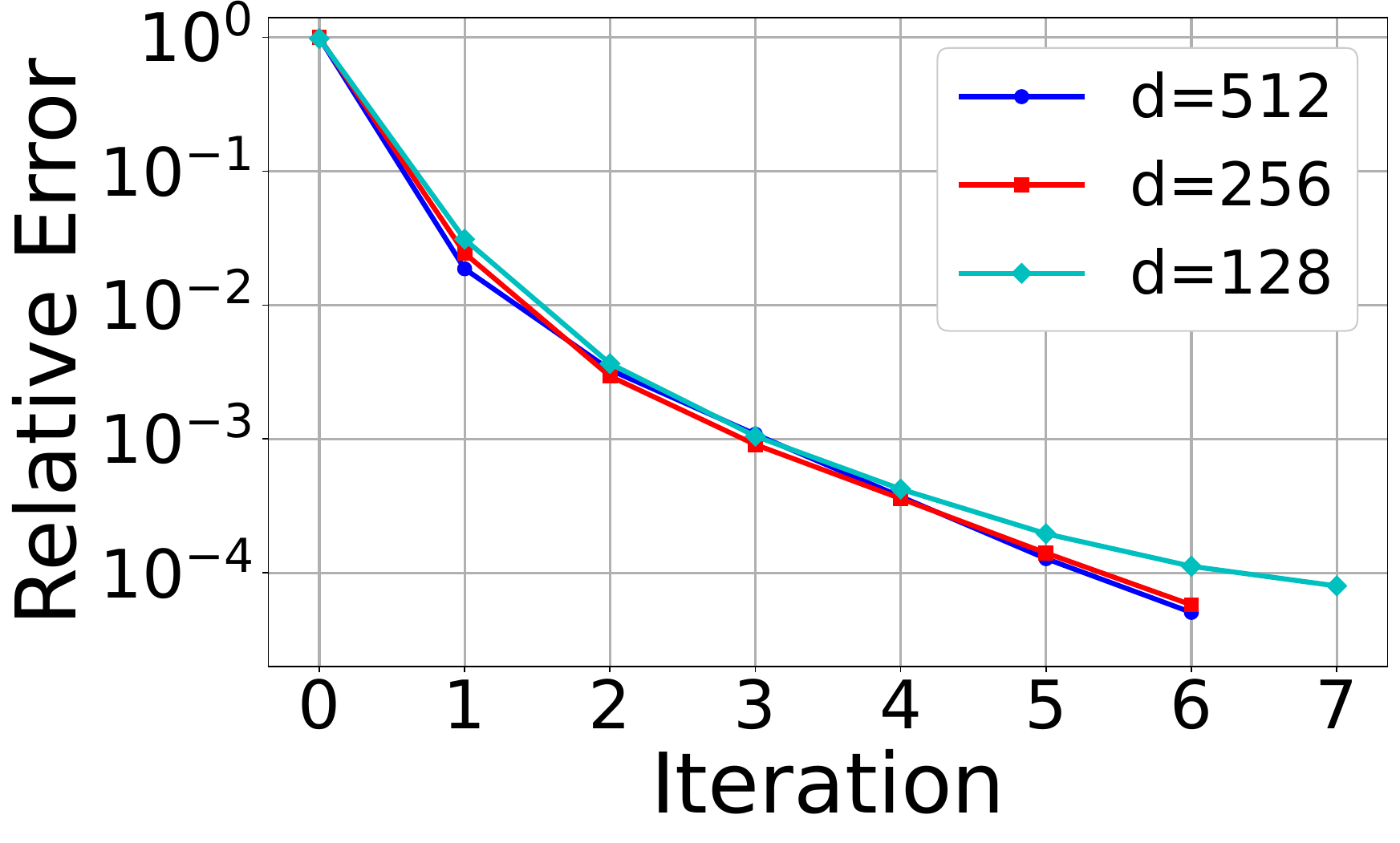}
	\end{minipage}}
   \subfigure[$\sigma=1.4$]{
	\begin{minipage}{0.31\linewidth}
		\centering
		\includegraphics[width=\linewidth]{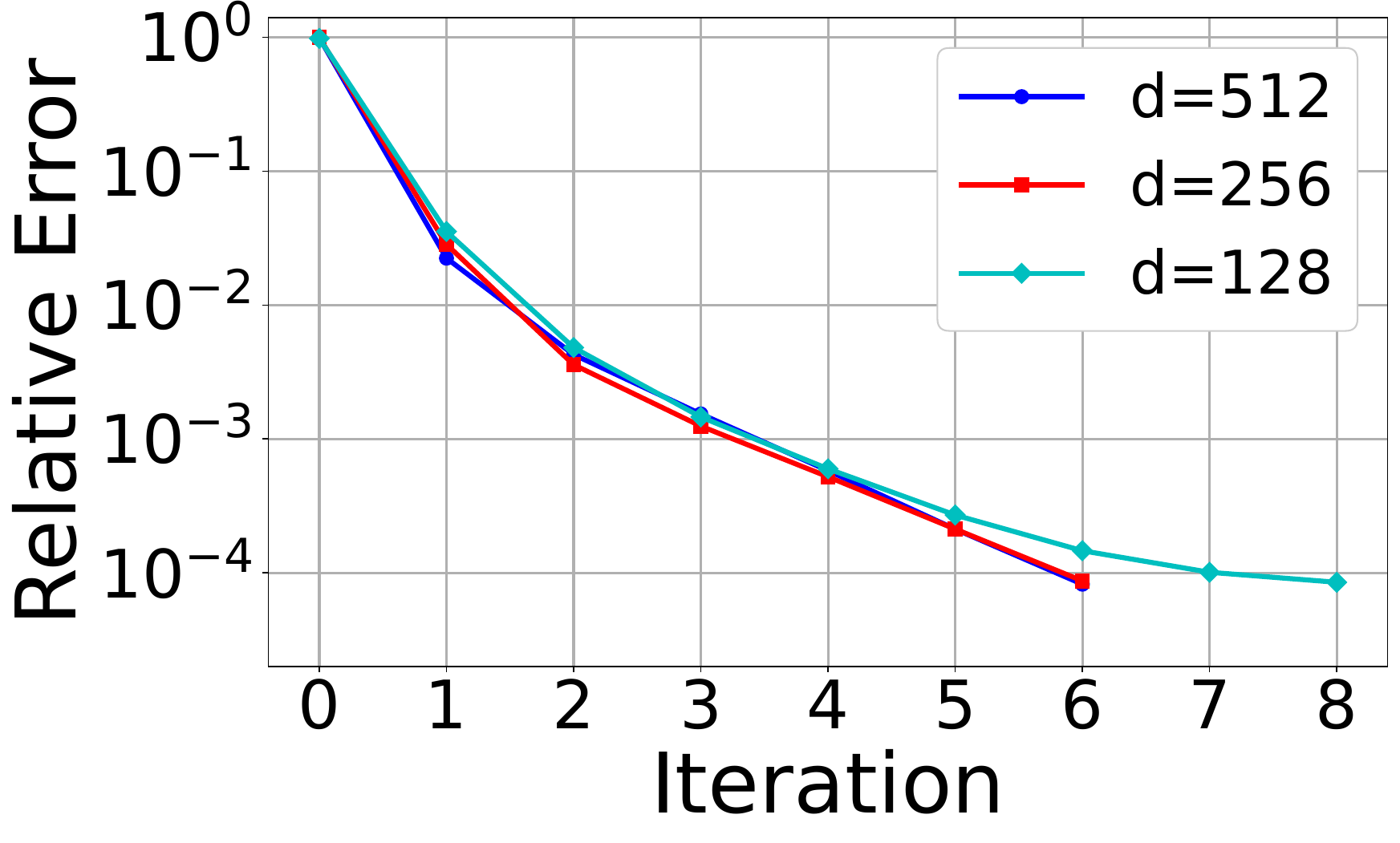}
	\end{minipage}}
\subfigure[$\sigma=1$]{
	\begin{minipage}{0.31\linewidth}
		\centering
		\includegraphics[width=\linewidth]{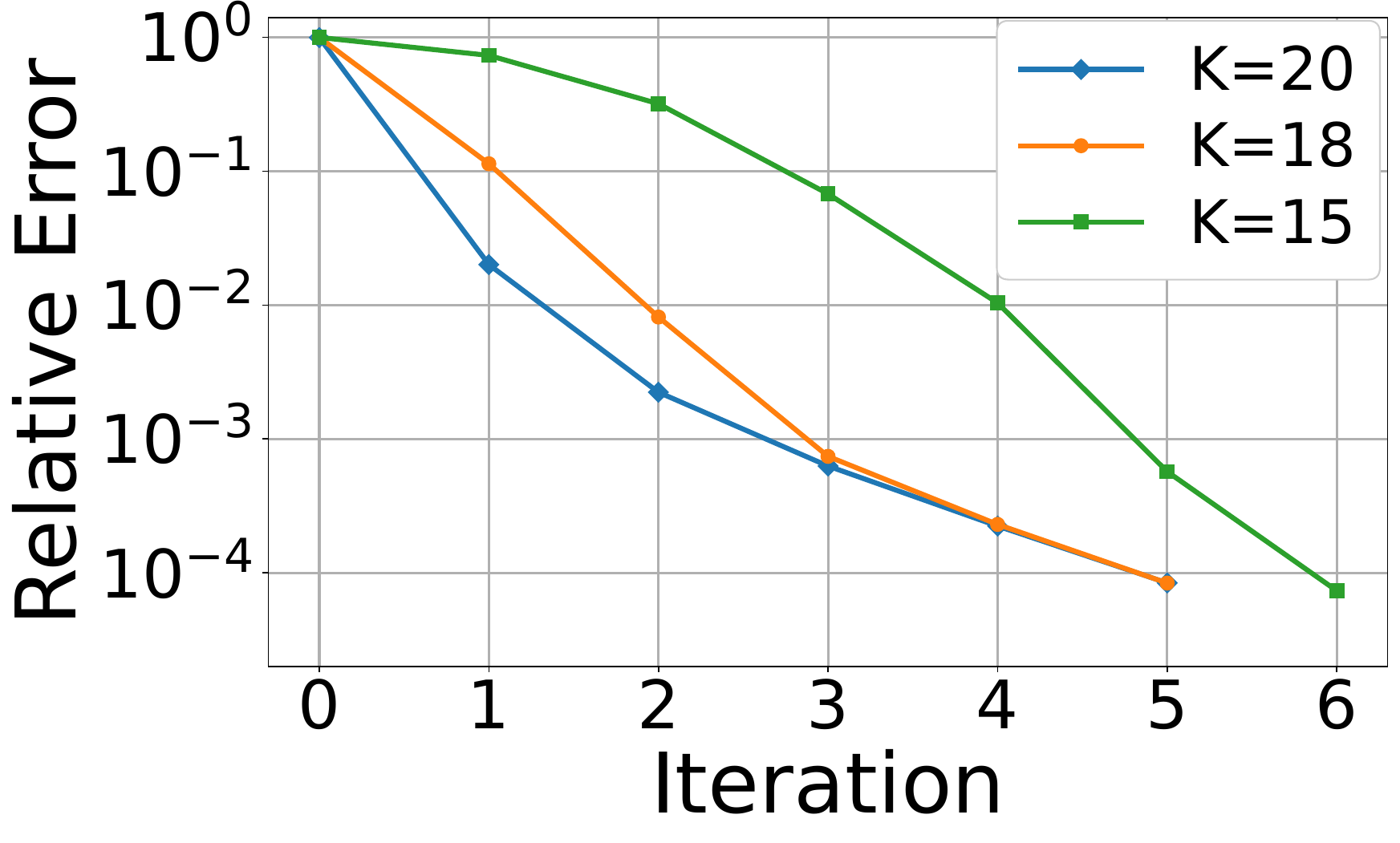}
	\end{minipage}}
 \subfigure[$\sigma=1.2$]{
	\begin{minipage}{0.31\linewidth}
		\centering
		\includegraphics[width=\linewidth]{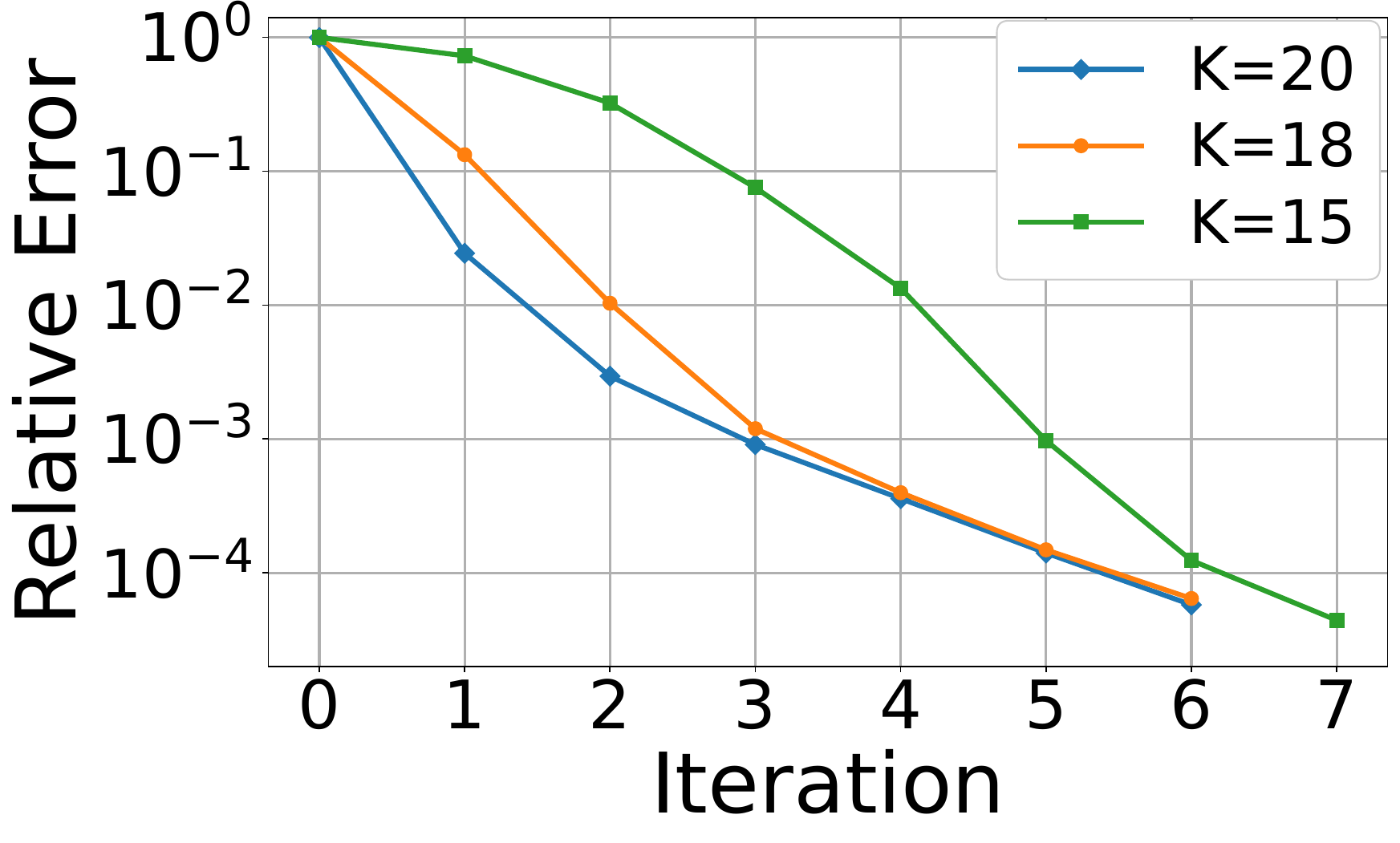}
	\end{minipage}}
   \subfigure[$\sigma=1.4$]{
	\begin{minipage}{0.31\linewidth}
		\centering
		\includegraphics[width=\linewidth]{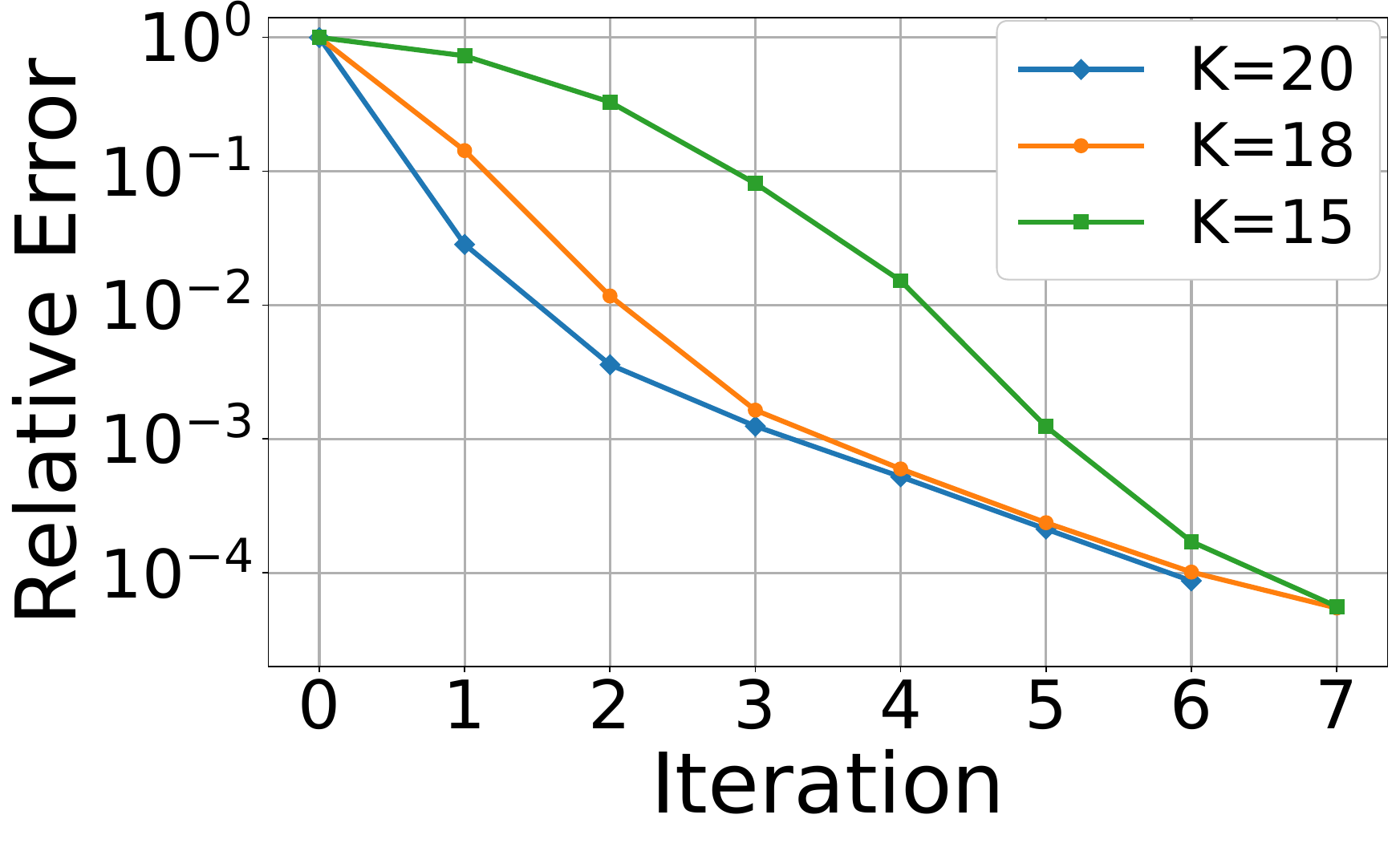}
	\end{minipage}}

   \caption{Comparison results across different dimensions and learning windows on CG tasks.}
  \label{CGdiffDIm}
\end{figure}

\begin{figure}[htb]
	\centering
 \subfigure[$\sigma=1$]{
	\begin{minipage}{0.31\linewidth}
		\centering
		\includegraphics[width=\linewidth]{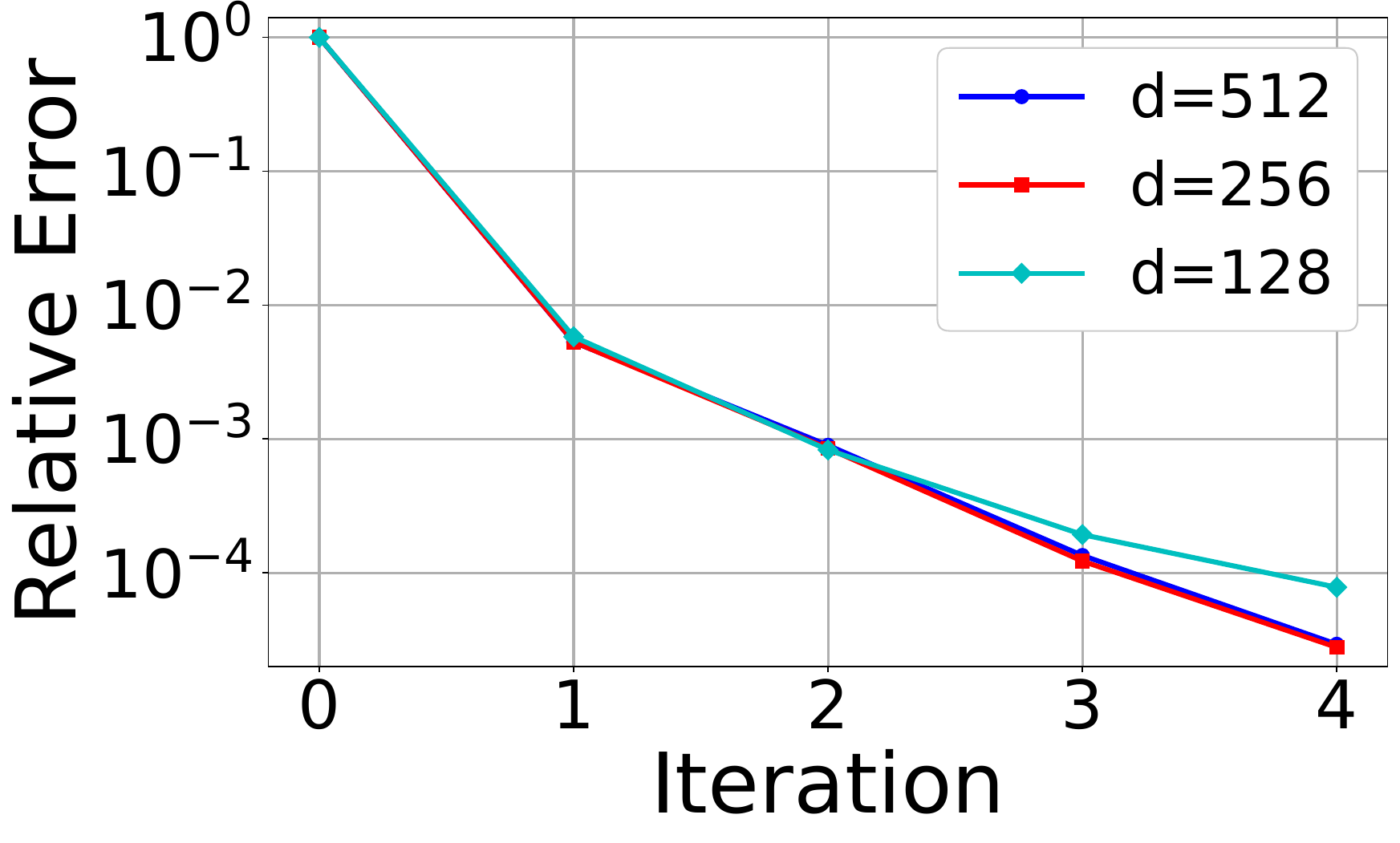}
	\end{minipage}}
 \subfigure[$\sigma=1.2$]{
	\begin{minipage}{0.31\linewidth}
		\centering
		\includegraphics[width=\linewidth]{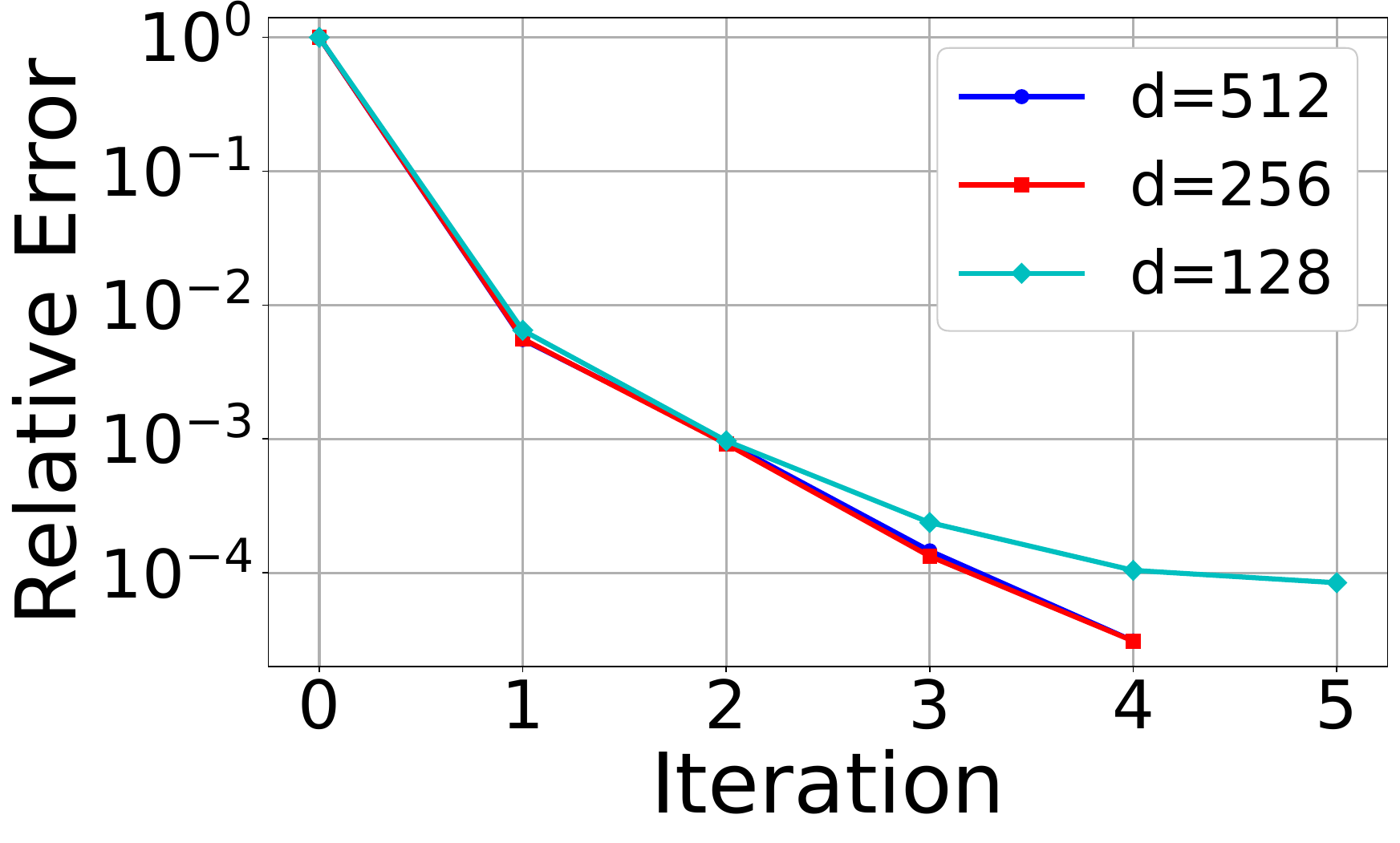}
	\end{minipage}}
   \subfigure[$\sigma=1.4$]{
	\begin{minipage}{0.31\linewidth}
		\centering
		\includegraphics[width=\linewidth]{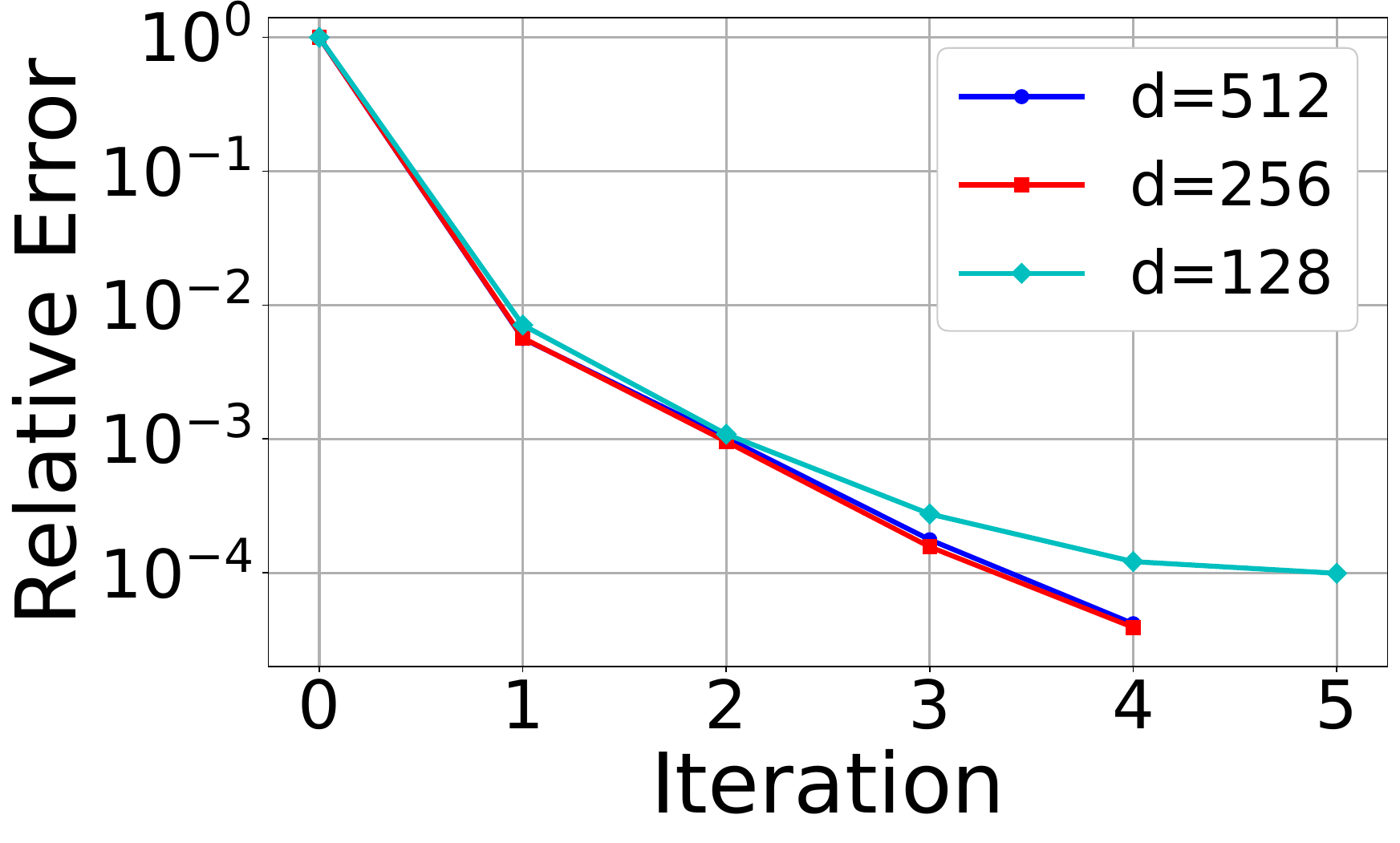}
	\end{minipage}}
    
 \ \subfigure[$\sigma=1$]{
	\begin{minipage}{0.31\linewidth}
		\centering
		\includegraphics[width=\linewidth]{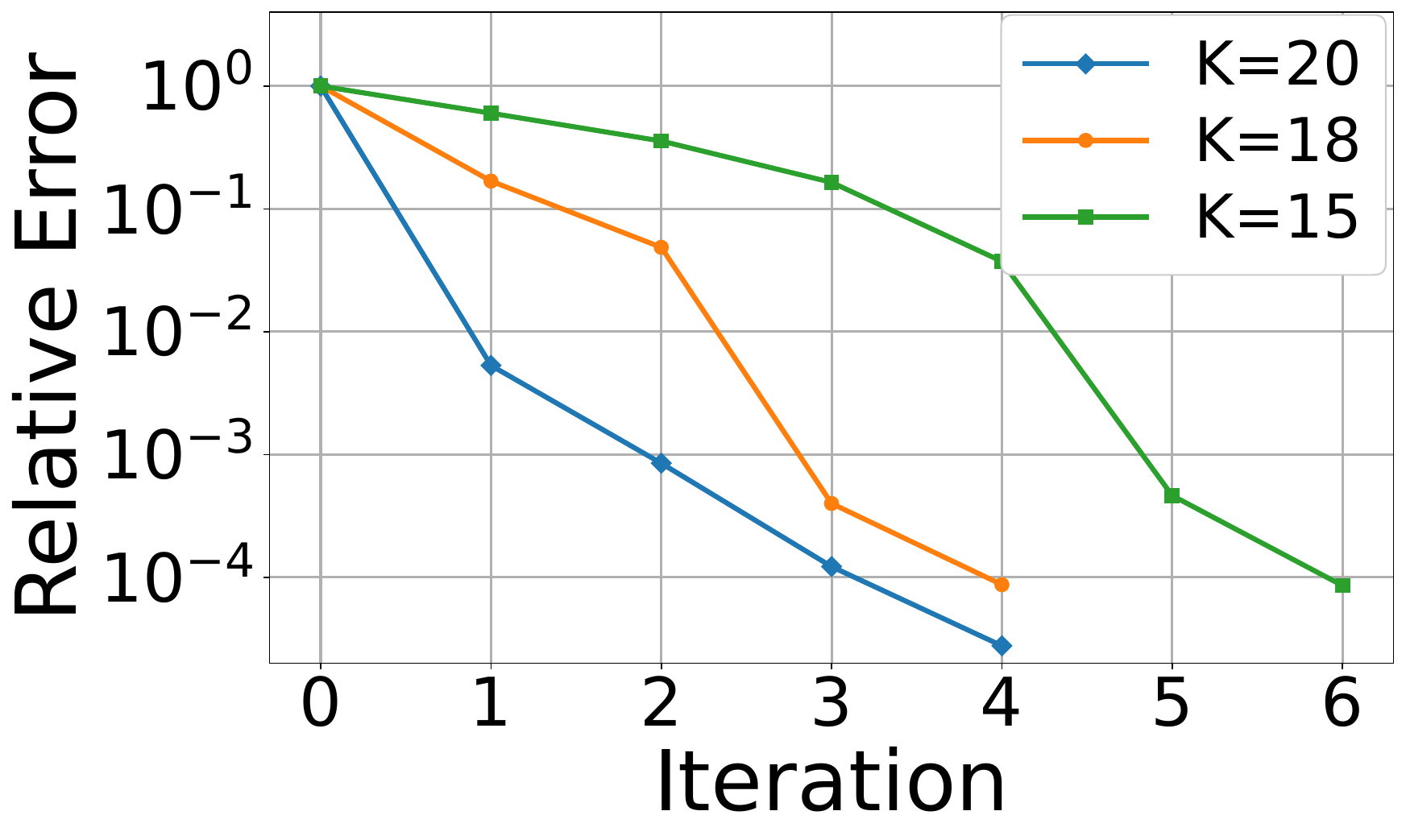}
	\end{minipage}}
 \subfigure[$\sigma=1.2$]{
	\begin{minipage}{0.31\linewidth}
		\centering
		\includegraphics[width=\linewidth]{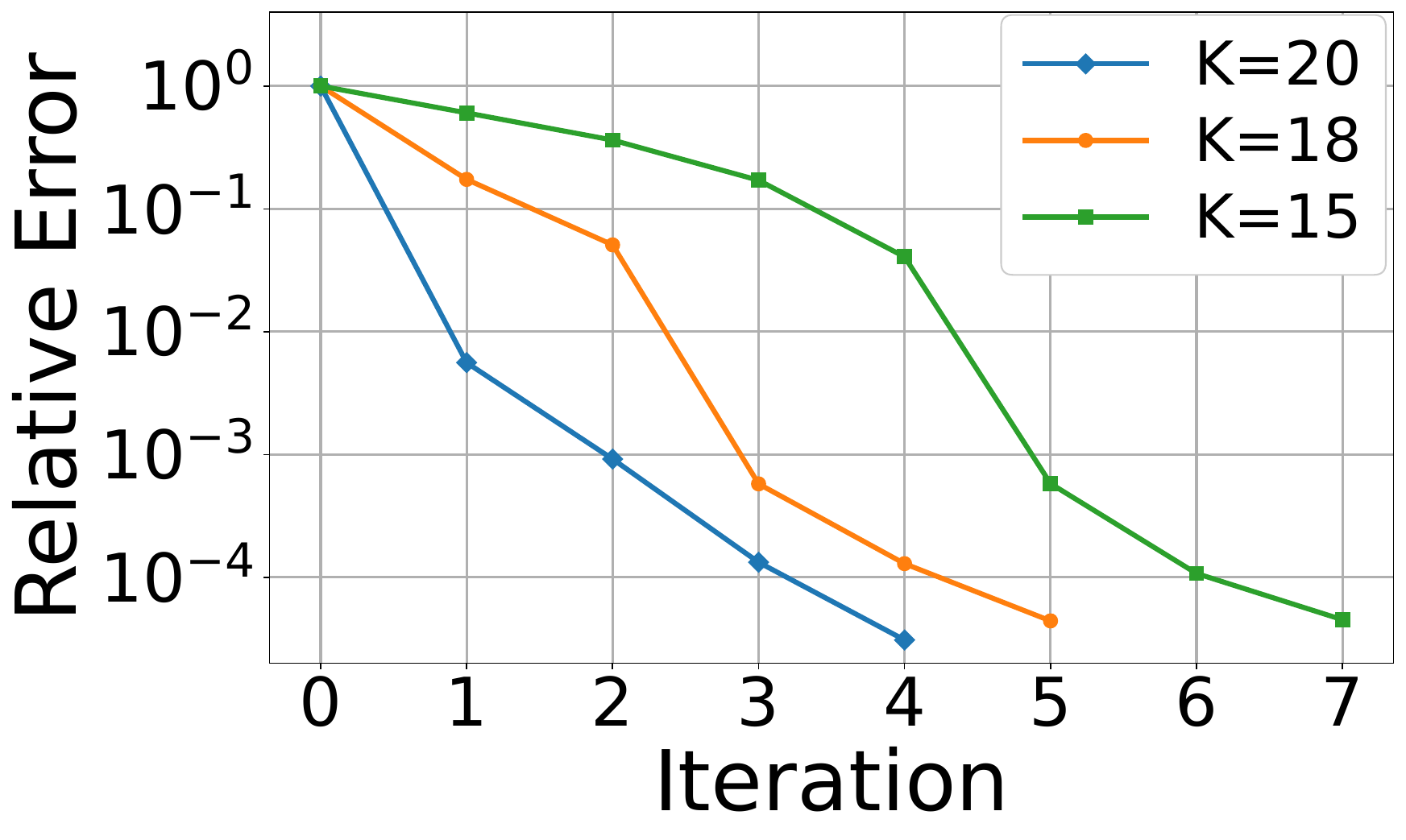}
	\end{minipage}}
   \subfigure[$\sigma=1.4$]{
	\begin{minipage}{0.31\linewidth}
		\centering
		\includegraphics[width=\linewidth]{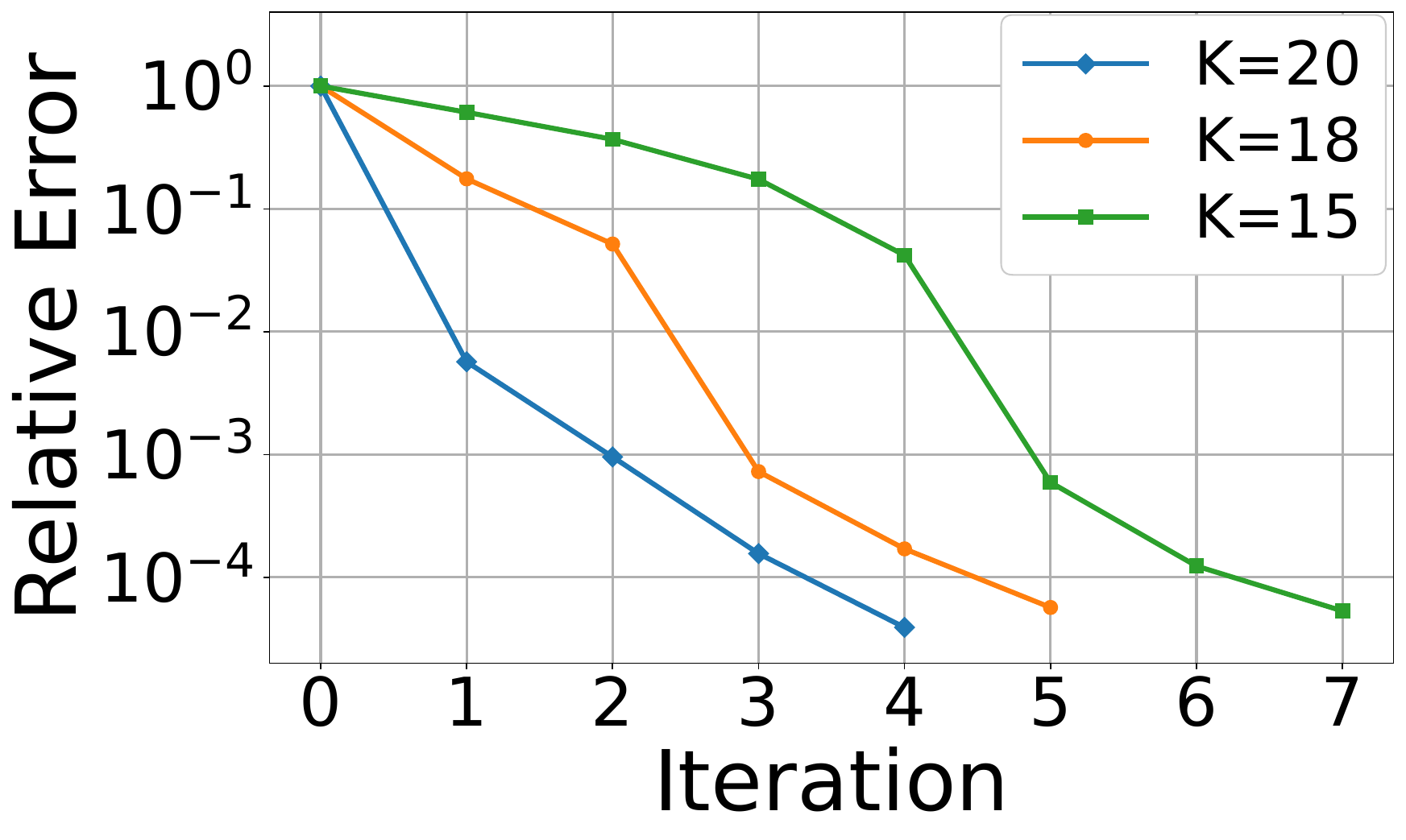}
	\end{minipage}}
   \caption{Comparison results across different dimensions and learning windows on PCG tasks.}
  \label{PCGdiffDIm}
\end{figure}

\section{Concluding Remarks}

We proposed NLAFormer, a transformer-based framework for numerical linear algebra that efficiently represents fundamental operations with far smaller input size and fewer layers than loop-transformers. Using the conjugate gradient method as a case study, we demonstrated its ability to express complex iterative numerical algorithms. Experiments further show that NLAFormer can internalize the iterative logic of classical solvers and, through data-driven training, identify update strategies for faster convergence, underscoring its potential as a compact and versatile approach for combining deep learning with numerical computation.

Looking ahead, several promising directions remain to be explored. Many important numerical computation tasks, such as nonlinear systems and partial differential equations, fall outside the current category. Extending our framework to these areas will test its generality and advance a unified neural approach to numerical analysis.
On the other hand, 
integrating NLAFormer with classical solvers in a complementary fashion could combine the data-driven adaptability of deep models with the rigor of established numerical methods. For example, NLAFormer could rapidly generate high-quality initial guesses, which classical algorithms can then refine for precision and convergence.
Pursuing these directions will strengthen the theoretical foundation and practical applicability of NLAFormer, potentially redefining how numerical computation is conceived and implemented in both academic research and industrial applications.

\bibliographystyle{siamplain}
\bibliography{reference_new}

\begin{thebibliography}{10}

\bibitem{Charton2021LinearAW}
{\sc F.~Charton}, {\em Linear algebra with transformers}, Transactions on Machine Learning Research,  (2022).

\bibitem{choi2019graph}
{\sc E.~Choi, Z.~Xu, Y.~Li, M.~Dusenberry, G.~Flores, E.~Xue, and A.~Dai}, {\em Learning the graphical structure of electronic health records with graph convolutional transformer}, in Proceedings of the AAAI Conference on Artificial Intelligence, vol.~34, 2020, pp.~606--613.

\bibitem{diao2022relational}
{\sc C.~Diao and R.~Loynd}, {\em Relational attention: Generalizing transformers for graph-structured tasks}, in International Conference on Learning Representations, 2023.

\bibitem{gao2024expressive}
{\sc Y.~Gao, C.~Zheng, E.~Xie, H.~Shi, T.~Hu, Y.~Li, M.~Ng, Z.~Li, and Z.~Liu}, {\em Algoformer: An efficient transformer framework with algorithmic structures}, Transactions on Machine Learning Research,  (2025).

\bibitem{Giannou2023LoopedTA}
{\sc A.~Giannou, S.~Rajput, J.-y. Sohn, K.~Lee, J.~D. Lee, and D.~Papailiopoulos}, {\em Looped transformers as programmable computers}, in International Conference on Machine Learning, PMLR, 2023, pp.~11398--11442.

\bibitem{islam2024comprehensive}
{\sc S.~Islam, H.~Elmekki, A.~Elsebai, J.~Bentahar, N.~Drawel, G.~Rjoub, and W.~Pedrycz}, {\em A comprehensive survey on applications of transformers for deep learning tasks}, Expert Systems with Applications, 241 (2024), p.~122666.

\bibitem{jbilou2021numerical}
{\sc K.~Jbilou and M.~Mitrouli}, {\em Numerical Linear Algebra and the Applications}, MDPI-Multidisciplinary Digital Publishing Institute, 2021.

\bibitem{kingma2014adam}
{\sc D.~P. Kingma and J.~Ba}, {\em Adam: A method for stochastic optimization}, in International Conference on Learning Representations, 2015.

\bibitem{leshno1993multilayer}
{\sc M.~Leshno, V.~Y. Lin, A.~Pinkus, and S.~Schocken}, {\em Multilayer feedforward networks with a nonpolynomial activation function can approximate any function}, Neural Networks, 6 (1993), pp.~861--867.

\bibitem{shewchuk1994introduction}
{\sc J.~R. Shewchuk}, {\em An introduction to the conjugate gradient method without the agonizing pain}, Tech. Report CMU-CS-94-125, Carnegie Mellon University, School of Computer Science, Pittsburgh, PA, Aug. 1994.

\bibitem{trefethen2022numerical}
{\sc L.~N. Trefethen and D.~Bau}, {\em Numerical linear algebra}, SIAM, 2022.

\bibitem{vaswani2017attention}
{\sc A.~Vaswani, N.~Shazeer, N.~Parmar, J.~Uszkoreit, L.~Jones, A.~N. Gomez, {\L}.~Kaiser, and I.~Polosukhin}, {\em Attention is all you need}, Advances in Neural Information Processing Systems, 30 (2017).

\bibitem{Yang2023LoopedTA}
{\sc L.~Yang, K.~Lee, R.~D. Nowak, and D.~Papailiopoulos}, {\em Looped transformers are better at learning learning algorithms}, in International Conference on Learning Representations, 2024.

\end{thebibliography}

\end{document}